%-----------------------------------------------------------------------
% Beginning of proc-l-template.tex
%-----------------------------------------------------------------------
%
%     This is a topmatter template file for PROC for use with AMS-LaTeX.
%
%     Templates for various common text, math and figure elements are
%     given following the \end{document} line.
%
%%%%%%%%%%%%%%%%%%%%%%%%%%%%%%%%%%%%%%%%%%%%%%%%%%%%%%%%%%%%%%%%%%%%%%%%

%     Remove any commented or uncommented macros you do not use.

\documentclass{amsart}

%     If you need symbols beyond the basic set, uncomment this command.
\usepackage{amssymb}
\usepackage{amsfonts}
\usepackage{geometry}
\usepackage{hyperref}
\usepackage{graphicx}
\usepackage{chngcntr}

%     If your article includes graphics, uncomment this command.
%\usepackage{graphicx}

%     If the article includes commutative diagrams, ...
%\usepackage[cmtip,all]{xy}

%     Update the information and uncomment if AMS is not the copyright
%     holder.
%\copyrightinfo{2009}{American Mathematical Society}

\newcommand{\R}{\mathbb{R}}

\newcommand{\ve}{\varepsilon}
\newcommand{\lv}{\left\vert}
\newcommand{\rv}{\right\vert}
\newcommand{\func}{\mathrm}

\newcommand{\N}{\mathbb{N}}

\DeclareMathOperator{\supp}{supp}

\newtheorem{theorem}{Theorem}[section]
\newtheorem{lemma}[theorem]{Lemma}
\newtheorem{prop}[theorem]{Proposition}
\newtheorem{corollary}[theorem]{Corollary}

\theoremstyle{definition}
\newtheorem{definition}[theorem]{Definition}

\theoremstyle{remark}
\newtheorem{remark}[theorem]{Remark}

\numberwithin{equation}{section}
\usepackage{enumerate}

\begin{document}

% \title[short text for running head]{full title}
\title[Degenerate elliptic equations with $\Phi$-admissible weights]{Degenerate elliptic equations with $\Phi$-admissible weights}

%    Only \author and \address are required; other information is
%    optional.  Remove any unused author tags.

%    author one information
% \author[short version for running head]{name for top of paper}
\author[Korobenko]{Lyudmila Korobenko}
\address{Reed College\\
Portland, Oregon\\
korobenko@reed.edu}
%\thanks{}

%    \subjclass is required.
\subjclass[2020]{46E35, 35J70, 35J60, 35B65, 31E05, 30L99}
\keywords{degenerate elliptic PDEs, regularity theory, weights, Orlicz spaces, Sobolev inequality}

\date{}

\dedicatory{}

%    Abstract is required.
\begin{abstract} We develop regularity theory for degenerate elliptic equations with the degeneracy controlled by a weight. More precisely, we show local boundedness and continuity of weak solutions under the assumption of a weighted Orlicz-Sobolev and Poincar\'{e} inequalities. The proof relies on a modified DeGiorgi iteration scheme, developed in \cite{KRSSh}. The Orlicz-Sobolev inequality we assume here is much weaker than the classical $(2\sigma,2)$ Sobolev inequality with $\sigma>1$ which is typically used in the DeGiorgi or Moser iteration.

\end{abstract}

\maketitle

\section{Introduction}\label{sec:intro}
The main focus of this paper is a degenerate elliptic operator of the form 
\begin{equation}\label{def:op}
Lu=\nabla^{\func{tr}} A(x)\nabla,
\end{equation}
where $A$ is an $n\times n$ symmetric nonnegative semidefinite matrix with bounded coefficients. 
Regularity theory for such operators has a long history going back to Nash \cite{Nash}, DeGiorgi \cite{DeG}, and Moser \cite{Mos}, who considered the case of an elliptic matrix $A$ and proved H\"{o}lder regularity of weak solutions to $Lu=0$. Since then it has been further developed and generalized in a numerous different ways, and we only mention a few most relevant works. In 1982 Fabes, Kenig, and Serapoini showed \cite{FKS} that the Moser iteration scheme can be implemented in the degenerate case, as long as the weight controlling the degeneracy satisfies the following four conditions with $p=2$: (1) the doubling property; (2) the uniqueness of the weak gradient; (3) the $(p,p)$ Poincar\'{e} inequality; (4) the $(q,p)$-Sobolev inequality with $q>p$. Weights satisfying (1)-(4) were called $p$-admissible in \cite{HKM} and a rich potential theory was developed for them, see \cite{BB11}. The interplay between these four conditions received considerable attention, see e.g. \cite{HK95, SC, KMR}. Shortly after Fabes, Kenig, and Serapioni's work, Franchi and Lanconelli \cite{FL} applied the Moser technique in the case when the degeneracy is not controlled by a weight, but is such that the necessary ingredients can be obtained for the Carnot-Carath\'{e}odory metric space instead. After these two pioneering works, the idea has been extended by many authors to more difficult situations, see e.g. \cite{GL, SW, C-UR, C-UMR}.

In most literature, the doubling condition is central in both the
process of performing the Moser iteration, and in the proof of Sobolev inequality
itself. This condition provides a homogeneous space structure, which makes it
possible to adapt many classical tools available in the Euclidean space \cite{Haj, Shan}. Moreover, it turns out to be necessary for the $(q,p)$ Sobolev inequality with $q>p$, \cite{KMR}.
However, such an assumption greatly restricts the type of degeneracy that the matrix $A$ is allowed to have. For example, a simple two dimensional matrix of the form
\[
A(x,y)=\mathrm{diag}\{1,f^2(x)\}
\]
considered in \cite{KRSSh}, does not produce a doubling metric measure space if $f$ vanishes to an infinite order, and therefore the Sobolev inequality is no longer available. A weaker Orlicz Sobolev inequality can be used instead, which requires a significant alteration of the classical DeGiorgi iteration scheme, see \cite{KRSSh} for details.

In this paper we generalize the abstract theory developed in \cite{KRSSh} in two ways. First, we work in a metric measure space, which allows for degenerate matrices controlled by a weight as in \cite{FKS, CW, FGW}, as well as one constant eigenvalue as in \cite{FL, SW,  KRSSh}, see also \cite{Fr}. Second, we assume an Orlicz Sobolev inequality with a very weak logarithmic gain. This seems to be the weakest Sobolev type inequality ever used in the Moser or DeGiorgi iteration scheme.  The proofs follow the general blueprint of \cite[Chapters 4, 6]{KRSSh}. However, significant modifications of the argument had to be made to account for (a) weaker $L^2$ versions of Sobolev and Poincar\'{e} inequalities are now assumed instead of $L^1$ versions; and (b) a more general family of Young functions is considered.

The paper is organized as follows. In Section \ref{sec:main} we give relevant definitions and state our main results. Section \ref{sec:fam} is devoted to the description of special families of Young functions and Lipschitz cutoff functions used in the paper. The proofs of the main results, Theorems \ref{DG} and \ref{thm:continuity}, are contained in Section \ref{sec:proofs}. Section \ref{sec:poinc} provides some sufficient conditions for Sobolev and Poincar\'{e} inequalities, as well as the admissibility condition for the right hand side. Finally, in Section \ref{sec:ex} we give examples of operators satisfying sufficient conditions in Theorems \ref{DG} and \ref{thm:continuity}.

\section{Main results}\label{sec:main}
We now briefly describe our main results. 
As in \cite{KRSSh} we consider the divergence form equation 
\begin{equation}\label{eq_0}
Lu=\nabla ^{\func{tr}}A\left( x\right) \nabla u=\phi ,\ \ \ \ \ x\in
\Omega ,
\end{equation}%
where $A$ is a nonnegative semidefinite $n\times n$ symmetric matrix satisfying for some constant $k>0$ and a weight $w$
\begin{equation}\label{A-cond}
\left\vert A(x) \xi\right\vert\leq kw(x)\left\vert\xi\right\vert,\quad \text{a.e.}\   x\in\Omega,\  \forall \xi\in\R^n.
\end{equation}
%The corresponding second order special quasilinear equation (where only $%
%u$, and not $\nabla u$, appears nonlinearly) is
%\begin{equation}
%\mathcal{L}u\equiv \nabla ^{\func{tr}}\mathcal{A}\left( x,u\left(
%x\right) \right) \nabla u=\phi ,\ \ \ \ \ x\in \Omega ,  \label{eq_0}
%\end{equation}%
%and we assume the following quadratic form condition on the `quasilinear'
%matrix $\mathcal{A}(x,z)$, 
%\begin{equation}
%c\,\xi ^{T}A(x)\xi \leq \xi ^{T}\mathcal{A}(x,z)\xi \leq C\,\xi ^{T}A(x)\xi
%\ ,  \label{struc_0}
%\end{equation}%
%for a.e. $x\in \Omega $ and all $z\in \mathbb{R}$, $\xi \in \mathbb{R}^{n}$.
%Here $c,C$ are positive constants and 
Suppose that $A(x)\approx B\left( x\right) ^{\func{tr}}B\left( x\right) $ where $B\left( x\right) $ is a Lipschitz continuous $%
n\times n$ real-valued matrix defined for $x\in \Omega $, i.e. there exist $c,C>0$ s.t. 
\begin{equation}
c\,\xi ^{\func{tr}}A(x)\xi \leq \left\vert B\left( x\right) \xi\right\vert^{2} \leq C\,\xi ^{\func{tr}}A(x)\xi
\ .  \label{struc_0}
\end{equation}%
We define the $A$%
-gradient by%
\begin{equation}
\nabla _{A}=B\left( x\right) \nabla \ ,  \label{def A grad}
\end{equation}%
and the associated degenerate Sobolev space $W_{A}^{1,2}\left( \Omega
\right) $ to have norm%
\begin{equation*}
\left\Vert v\right\Vert _{W_{A}^{1,2}}\equiv \sqrt{\int_{\Omega }\left(
	\left\vert v\right\vert ^{2}+\nabla v^{\func{tr}}A\nabla v\right) }\approx \sqrt{%
	\int_{\Omega }\left( \left\vert v\right\vert ^{2}+\left\vert \nabla
	_{A}v\right\vert ^{2}\right) }.
\end{equation*}

To define weak solutions to (\ref{eq_0}) we need to assume that the right hand side $\phi$ of (\ref{eq_0}) is $A$-admissible, which is defined as follows. 
\begin{definition}
	\label{def A admiss new}Let $\Omega $ be a bounded domain in $\mathbb{R}^{n}$%
	. Fix $x\in \Omega $ and $\rho >0$. We say $\phi $ is $A$\emph{-admissible}
	at $\left( x,\rho \right) $ if
	\begin{equation*}
	\Vert \phi \Vert _{X\left( B\left( x,\rho \right) \right) }\equiv \sup_{v\in
		\left( W_{A}^{1,2}\right) _{0}(B\left( x,\rho \right) )}w\left(\{y\in B\left( x,\rho \right)\colon v(y)\neq 0\}\right)^{-\frac{1}{2}}\frac{\left\vert\int_{B\left(
			x,\rho \right) } v\phi  \,dy\right\vert}{\Vert \nabla _{A}v\Vert_{L^{2}(B\left( x,\rho \right))}}<\infty .
	\end{equation*}
	We say $\phi $ is $A$\emph{-admissible} in $\Omega$, written $\phi\in X(\Omega)$, if $\phi$ is $A$\emph{-admissible}
	at $\left( x,\rho \right) $ for all $B(x,\rho)$ contained in $\Omega$.
\end{definition}

\begin{remark}
	For a simple sufficient condition of $A$-admissibility see Section \ref{sec:adm}
\end{remark}

\begin{definition}
	Let $\Omega $ be a bounded domain in $\mathbb{R}^{n}$ with $A$ as above. Assume that $\phi \in L_{\func{loc}}^{2}\left( \Omega
	\right) \cup X\left( \Omega \right) $. We say that $u\in W_{A}^{1,2}\left(
	\Omega \right) $ is a \emph{weak solution} to $Lu=\phi $ provided%
	\begin{equation*}
	-\int_{\Omega }\nabla g\left( x\right) ^{\func{tr}}A\left(
	x\right) \nabla u=\int_{\Omega }\phi g
	\end{equation*}%
	for all nonnegative $g\in \left( W_{A}^{1,2}\right) _{0}\left( \Omega \right) $, where $%
	\left( W_{A}^{1,2}\right) _{0}\left( \Omega \right) $ denotes the closure in 
	$W_{A}^{1,2}\left( \Omega \right) $ of the subspace of Lipschitz continuous
	functions with compact support in $\Omega $. Weak sub- and super- solutions are defined by replacing $=$ with $\geq$ and $\leq$ respectively in the display above.
\end{definition}

We now define Orlicz spaces, to state the weak version of the Sobolev inequality used in the proof.
%It will also be convenient to introduce a normalized weighted measure. Let $w$ be a weight, 
%and let $d$ be a metric, which in applications is typically the Carnot-Carath\'{e}odory metric associated to $A$, but we do not need to assume this. 
%Fx a metric ball $B\subset \Omega\subset\R^n$, we then define the measure $\mu$ as follows
%\[
%d\mu(x)=d\mu_B(x)\equiv\frac{w(x)dx}{w(B)},
%\]
%where 
%\[
%w(B)=\int_{B}w(x)dx.
%\]
Let $\Phi :\left[ 0,\infty \right)
\rightarrow \left[ 0,\infty \right) $ be a Young function, which for our
purposes is a convex piecewise differentiable (meaning there are at most
finitely many points where the derivative of $\Phi $ may fail to exist, but
right and left hand derivatives exist everywhere) function such that $\Phi
\left( 0\right) =0$ and

\begin{equation*}
\frac{\Phi \left( x\right) }{x}\rightarrow \infty \text{ as }x\rightarrow
\infty.
\end{equation*}%
% We also note here that from the assumptions on $\Phi$ it follows that the function
% \begin{equation*}%\label{psi_def}
% 	\Psi(x)\equiv\frac{\Phi(x)}{x}
% \end{equation*}
% is increasing and satisfies
% \begin{equation*}
% 	\Psi(x)\rightarrow \infty \text{ as }x\rightarrow
% 	\infty, \quad\text{and}\quad
% 	\Psi(x)\rightarrow 0 \text{ as }x\rightarrow
% 	0.
% \end{equation*}%
Let $\nu $ be a $\sigma $-finite measure on a set $\Omega\subset \R^n$ and $L_{\ast }^{\Phi }$ be the set of measurable functions $f:\Omega\rightarrow 
\mathbb{R}$ such that $ 	\int_{\Omega}\Phi \left( \left\vert f\right\vert \right) d\nu<\infty$.
We define $L^{\Phi }=L^{\Phi }_{\nu}$ to be the linear span of $L_{\ast
}^{\Phi }$, and then define%
\begin{equation}\label{norm_def}
\left\Vert f\right\Vert _{L^{\Phi }_{\nu}\left(\Omega \right) }\equiv \inf \left\{
k\in \left( 0,\infty \right) :\int_{\Omega}\Phi \left( \frac{\left\vert
	f\right\vert }{k}\right) d\nu \leq 1\right\} .
\end{equation}%
The Banach space $L^{\Phi }_{\nu}\left(\Omega \right) $ is precisely the space of
measurable functions $f$ for which the norm $\left\Vert f\right\Vert
_{L^{\Phi }_{\nu}\left(\Omega \right) }$ is finite.  For an extensive theory of Orlicz spaces we refer to \cite{RaoRen}.

Finally, let $d$ be a metric on $\Omega$, which in applications is usually a Carnot-Carath\'{e}odory, or subunit, metric associated to the operator, see e.g. \cite{FePh}. For our abstract results, however, we do not need to make this assumption. Let $B=B(y,r)$ be any metric ball in $\Omega$.
Here are our hypotheses for the metric measure space $(\Omega, d,\mu)$, where $d\mu(x)=w(x)dx$. 
\begin{enumerate}[(H1)]
	\item Proportional vanishing Poincar\'{e} inequality
	\begin{equation}\label{1-1poinc}
	\int_{B}|v|^2 d\mu\leq Cr^2\int_{\frac{3}{2}B}\left\vert \nabla_{A} v\right\vert^2dx,\ \ \ \ \ v\in Lip\left( \frac{3}{2}B\right) ,
	\end{equation}%
	provided that $v$ vanishes on a subset $E\subseteq B$ satisfying $w(E)\geq \frac{1}{2}w(B)$.
	\item Orlicz-Sobolev inequality
	\begin{equation}\label{orl-sob-w}
	\left\Vert v\right\Vert _{L^{\Phi }_{\mu_{B}}}\leq C\frac{\varphi \left(
		r\left( B\right) \right) }{w(B)^{\frac{1}{2}}}\left\Vert \nabla_{A} v\right\Vert _{L^{2} },\ \ \ \ \ v\in Lip_{c}\left( B\right) ,
	\end{equation}%
	where $\varphi$ is an increasing function defined on some interval $[0,R]$ satisfying $\varphi(r)\geq r$ $\forall r\in[0,R]$, $\varphi(0)=0$, and is referred to as the ``superradius''. Here
	$\mu_B$ is the normalized weighted measure defined as
	\[
	d\mu_B(x)\equiv\frac{w(x)dx}{w(B)},
	\]
	where 
	\[
	w(B)=\int_{B}w(x)dx,
	\]
	and $B\subset \Omega\subset\R^n$ is a fixed metric ball.
	
	\item Existence of the adapted sequence of Lipschitz cutoff functions $\{\psi_j\}_{j=1}^{\infty}$ as in Definition \ref{def_cutoff}.
\end{enumerate}	

We can now state our main results.
\begin{theorem}[Local boundedness]
	\label{DG}Assume that for every ball $B\subset \Omega$ the Orlicz Sobolev norm inequality (\ref{orl-sob-w}) holds
	with $\Phi =\Phi _{k,N}$ as in (\ref{phi-def}) for some $k=1,2,\dots$, $N>1$, and superradius $\varphi \left(
	r\right) $. In addition, assume that there exists an adapted sequence of Lipschitz cutoff functions as in Definition \ref{def_cutoff}. Suppose further that $\phi$ is $A$-admissible as in Definition \ref{def A admiss new}.
	
	\begin{enumerate}
		\item Then every weak subsolution $u$ to $Lu=\phi $ in $\Omega $,
		with $L$ as in (\ref{eq_0}), (\ref{A-cond}), satisfies the \emph{inner ball inequality}%
		\begin{equation}
		\left\Vert u_{+}\right\Vert _{L^{\infty }\left(\frac{1}{2}B\right)}\leq A_{k,N}(r)\left( 
		\frac{1}{w(B) }\int_{B}u_{+}^{2}w\right) ^{\frac{1}{2}%
		}+\left\Vert \phi\right\Vert _{X},  \label{Inner ball inequ}
		\end{equation}%
		\begin{equation}
		\text{where }A_{k,N}(r)\equiv C_{1}\left[\exp^{(k)} \left(
		C_{2}\left( \frac{\varphi \left( r\right) }{r}\right) ^{\frac{2}{N-1}}\right)\right]^{\frac{1}{2}},  \label{def AN}
		\end{equation}%
		for every ball $B\subset \Omega $ with radius $r$. Here the constants $C_{1}$ and $C_{2}$ depend on $N$ and $k$ but not on $r$.
		
		%		\item Thus $u$ is locally bounded above in $\Omega $, since $\mathcal{L}$ is
		%		elliptic away from the $x_{n}$-axis by the structure conditions in
		%		Definition \ref{structure conditions}.
		
		\item In particular, weak solutions are locally bounded in $\Omega $.
	\end{enumerate}
\end{theorem}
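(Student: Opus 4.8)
The plan is to carry out the modified De~Giorgi iteration of \cite[Chapters~4,~6]{KRSSh}, adapted to the $L^{2}$-based hypothesis (\ref{orl-sob-w}) and to the two-parameter family $\Phi_{k,N}$ in place of a single Young function. Part~(2) is immediate from part~(1): a weak solution is simultaneously a weak sub- and supersolution, and $-u$ is a weak subsolution of $L(-u)=-\phi$ with $\|-\phi\|_{X}=\|\phi\|_{X}$, so applying the inner ball inequality (\ref{Inner ball inequ}) to $u$ and to $-u$ bounds $\|u\|_{L^{\infty}(\frac{1}{2}B)}$; covering a compact subset of $\Omega$ by finitely many balls $\frac{1}{2}B_{i}$ with $B_{i}\subset\Omega$ then gives $u\in L^{\infty}_{\mathrm{loc}}(\Omega)$. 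So the whole content is the inner ball inequality, and throughout we may take $u$ to be a weak subsolution; replacing $u$ by $u\wedge M$ (still a subsolution) and letting $M\to\infty$ at the end, we may also assume the averages entering the iteration are finite.

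\emph{Step 1 (Caccioppoli inequality).} Fix $B=B(y,r)\subset\Omega$, a level $\ell\ge 0$, and a cutoff $0\le\psi\le 1$ in $Lip_{c}(B)$. Testing the weak subsolution inequality against the nonnegative admissible function $g=\psi^{2}(u-\ell)_{+}$, expanding $\nabla_{A}g$ by the Leibniz rule, and using (\ref{A-cond})--(\ref{struc_0}) together with Cauchy--Schwarz to absorb the cross term, one gets
\begin{equation*}
\int_{B}\left|\nabla_{A}\bigl((u-\ell)_{+}\psi\bigr)\right|^{2}dx\ \lesssim\ \int_{B}|\nabla_{A}\psi|^{2}(u-\ell)_{+}^{2}\,dx\ +\ \left|\int_{B}\psi^{2}(u-\ell)_{+}\,\phi\,dx\right|.
\end{equation*}
The last term is exactly of the form controlled by the $X$-norm: since $\psi^{2}(u-\ell)_{+}\in(W^{1,2}_{A})_{0}(B)$ vanishes off $A_{\ell}(\psi):=\{x\in B:(u(x)-\ell)\psi(x)>0\}$, Definition~\ref{def A admiss new} gives $\bigl|\int_{B}\psi^{2}(u-\ell)_{+}\phi\bigr|\le\|\phi\|_{X}\,w(A_{\ell}(\psi))^{1/2}\,\|\nabla_{A}(\psi^{2}(u-\ell)_{+})\|_{L^{2}}$, and one more Cauchy--Schwarz/Young step absorbs the gradient factor, leaving the additive term $\|\phi\|_{X}^{2}\,w(A_{\ell}(\psi))$. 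This is the only place the hypothesis $\phi\in X(\Omega)$ enters, and it is arranged so that $\phi$ ultimately contributes only the additive $\|\phi\|_{X}$ in (\ref{Inner ball inequ}), without the amplification $A_{k,N}(r)$.

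\emph{Step 2 (Orlicz--Sobolev and the slow gain).} Apply (\ref{orl-sob-w}) with $\Phi=\Phi_{k,N}$ to $v=(u-\ell)_{+}\psi\in Lip_{c}(B)$ and insert Step~1:
\begin{equation*}
\left\|(u-\ell)_{+}\psi\right\|_{L^{\Phi_{k,N}}_{\mu_{B}}}\ \lesssim\ \frac{\varphi(r)}{w(B)^{1/2}}\Bigl(\|\nabla_{A}\psi\|_{L^{\infty}}\|(u-\ell)_{+}\|_{L^{2}(A_{\ell}(\psi))}+\|\phi\|_{X}\,w(A_{\ell}(\psi))^{1/2}\Bigr).
\end{equation*}
Because the function on the left is supported in $A_{\ell}(\psi)$, Hölder's inequality in Orlicz spaces converts the left-hand norm into an ordinary normalized $L^{2}(\mu_{B})$ norm over $A_{\ell}(\psi)$ at the price of a factor $\omega_{k,N}\!\bigl(w(A_{\ell}(\psi))/w(B)\bigr)$ governed by the Young conjugate of $t\mapsto\Phi_{k,N}(\sqrt{t})$; this factor tends to $0$ as its argument does, but --- because $\Phi_{k,N}$ (see (\ref{phi-def})) improves on $t^{2}$ only by a $k$-fold iterated logarithm --- it decays only like $\omega_{k,N}(s)\sim\bigl(\log^{(k)}(1/s)\bigr)^{-(N-1)}$. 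This is far weaker than the power gain $s^{1-1/\sigma}$ of a genuine $(2\sigma,2)$ Sobolev inequality, which is precisely why the classical geometric De~Giorgi iteration fails and the adapted cutoff sequence $\{\psi_{j}\}$ of Definition~\ref{def_cutoff} is needed.

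\emph{Step 3 (the iteration).} After dividing $u$ by $\bigl(\frac{1}{w(B)}\int_{B}u_{+}^{2}w\bigr)^{1/2}+\|\phi\|_{X}$ it suffices to prove $\|u_{+}\|_{L^{\infty}(\frac{1}{2}B)}\lesssim A_{k,N}(r)$. Choose an increasing sequence of levels $\ell_{j}\uparrow\ell_{\infty}$ with gaps $\delta_{j}=\ell_{j+1}-\ell_{j}$ tuned to $\Phi_{k,N}$, and run Steps~1--2 with $\psi=\psi_{j}$, the $j$-th adapted cutoff interpolating between nested regions shrinking to $\frac{1}{2}B$. Writing $a_{j}:=\frac{1}{w(B)}\int(u-\ell_{j})_{+}^{2}\psi_{j}^{2}\,w$ and $\theta_{j}:=w(A_{\ell_{j}}(\psi_{j}))/w(B)\le a_{j}/\delta_{j}^{2}$ (Chebyshev), and using the defining properties of the adapted cutoffs to reconcile the unweighted gradient integral of Step~1 with the weighted quantity $a_{j}$, Steps~1--2 collapse into a recursion of the schematic form
\begin{equation*}
a_{j+1}\ \le\ C^{\,j}\Bigl(\frac{\varphi(r)}{r}\Bigr)^{2}\,\omega_{k,N}\!\Bigl(\frac{a_{j}}{\delta_{j}^{2}}\Bigr)\,a_{j},
\end{equation*}
the powers $C^{\,j}$ coming from the controlled growth of $\|\nabla_{A}\psi_{j}\|_{L^{\infty}}$ along the adapted sequence. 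The main obstacle --- and the technical heart of the argument, inherited from \cite{KRSSh} but complicated here by the $L^{2}$ setting and the two-parameter family $\Phi_{k,N}$ --- is the convergence analysis of this recursion: since each step only gains a factor $\omega_{k,N}\sim(\log^{(k)})^{-(N-1)}$, De~Giorgi's fast-geometric-convergence lemma is useless, and one must instead choose the $\ell_{j}$ carefully against the profile of $\Phi_{k,N}$ so that $a_{j}\to 0$ and $\ell_{\infty}<\infty$ precisely when the initial budget $a_{0}$ is $O\bigl(A_{k,N}(r)^{-2}\bigr)$. Counting how many compositions of $\omega_{k,N}$ are needed to exhaust a budget comparable to $(\varphi(r)/r)^{2/(N-1)}$ is exactly what produces the $k$-fold exponential $\exp^{(k)}\bigl(C_{2}(\varphi(r)/r)^{2/(N-1)}\bigr)$ in (\ref{def AN}). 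Finally $a_{j}\to 0$ forces $(u-\ell_{\infty})_{+}=0$ a.e.\ on $\frac{1}{2}B$, i.e.\ $u\le\ell_{\infty}$ there, and undoing the normalization yields (\ref{Inner ball inequ}).
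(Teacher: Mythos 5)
Your Steps 1 and 2 reproduce the paper's structure (Caccioppoli via Proposition~\ref{prop:cacc}, then Orlicz--Sobolev and Orlicz--H\"older splitting as in (\ref{hol})--(\ref{os_applied})), but Step 3 contains a quantitative error that would make the iteration fail. You write the recursion as $a_{j+1}\le C^{\,j}(\varphi(r)/r)^{2}\,\omega_{k,N}(a_j/\delta_j^{2})\,a_j$ with the factor $C^{j}$ attributed to the growth of $\|\nabla_{A}\psi_j\|_{L^{\infty}}$ ``along the adapted sequence,'' and with the gain $\omega_{k,N}(s)\sim(\log^{(k)}(1/s))^{-(N-1)}$. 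Both claims are wrong, and in a way that matters: a geometric loss $C^{j}$ overwhelms an iterated-logarithmic gain, so $a_j$ could not tend to $0$. The entire purpose of Definition~\ref{def_cutoff} is to make the radii $r_j$ contract so slowly that $\|\nabla_{A}\psi_j/\sqrt{w}\|_{L^{\infty}}\lesssim\kappa_{k,N}(j)/r$, i.e.\ the cutoff factor in the Caccioppoli inequality is $\kappa_{k,N}(j)^{2}\sim j^{2}(\ln j)^{2}\cdots(\ln^{(k-1)}j)^{N+1}$ --- polynomial in $j$ times iterated logarithms, not exponential. (A dyadic sequence of radii \emph{would} give $C^{j}$, and the paper explicitly notes that this is incompatible with an Orlicz bump this weak.) Likewise, the gain obtained from the conjugate $\widetilde\Phi_0$ in (\ref{use}) is the full product $\Gamma(t)\lesssim\bigl[(\ln\tfrac1t)^{2}\cdots(\ln^{(k-1)}\tfrac1t)^{2}(\ln^{(k)}\tfrac1t)^{2N}\bigr]^{-1}$, not merely $(\log^{(k)})^{-(N-1)}$; the extra factors are needed to cancel $\kappa_{j+2}^{2}$.

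The correct balance, which drives the paper's inductive proof that $b_j\equiv\ln(1/U_j)\ge b_0+2j$, is this: from (\ref{iter}) and (\ref{use}) one gets (\ref{bj-ind}), where the loss $-2\ln\kappa_{j+2}\approx -2\ln j-2\ln\ln j-\cdots-(N+1)\ln^{(k)}j$ is set against the gain $+2\ln b_j+\cdots+2N\ln^{(k)}b_j\approx 2\ln j+\cdots+2N\ln^{(k)}j$; the first $k-1$ terms cancel, leaving a net margin $(N-1)\ln^{(k)}j$ per step, which is exactly enough to propagate $b_j\ge b_0+2j$ provided $b_0$ is large enough that $(N-1)\ln^{(k)}b_0\gtrsim\ln(\varphi(r)/r)$. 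Inverting that threshold for $b_0$ --- not ``counting compositions of $\omega_{k,N}$'' --- is what produces $A_{k,N}(r)\sim\bigl[\exp^{(k)}(C_2(\varphi(r)/r)^{2/(N-1)})\bigr]^{1/2}$. Your outline also omits the rescaling via the parameter $\tau$ built into the levels $C_j=\tau\|\phi\|_X(1-c_j)$, which is how the paper obtains the additive $\|\phi\|_X$ term in (\ref{Inner ball inequ}) for arbitrary $U_0$ (rather than only for $U_0$ small); your pre-normalization by $(\frac{1}{w(B)}\int u_+^2 w)^{1/2}+\|\phi\|_X$ is a plausible alternative but would need to be carried through carefully, since the levels must remain comparable to $\|\phi\|_X$ for the admissibility bound (\ref{P_bound}) to apply.
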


\begin{theorem}[Continuity]\label{thm:continuity}
	Suppose that for every $x\in\Omega$, there exists $R>0$ s.t. for every $0<r<R$ the proportional vanishing Poincar\'{e} inequality (\ref{1-1poinc}) holds in $B=B(x,r)$. Further assume that the Orlicz Sobolev norm inequality (\ref{orl-sob-w}) holds
	with $\Phi =\Phi _{k,N}$ as in (\ref{phi-def}) for some $k=1,2,\dots$, $N>1$, and superradius $\varphi \left(
	r\right) $. In addition, assume that there exists an adapted sequence of Lipschitz cutoff functions as in Definition \ref{def_cutoff}. Finally, suppose that there exist $r_0>0$, $\alpha\in(0,(N-1)/(2N))$, and $C>0$ s.t. the superradius $\varphi$ satisfies
	\begin{equation}\label{phi-cond}
	\varphi(r)\leq Cr\left[\ln^{(k+2)}\left(\frac{1}{r}\right)\right]^{\alpha N},\quad\forall r\in(0,r_0).
	\end{equation}
	Then if $u$ is a weak solution to $L
	u=0$ in $\Omega$, we have $u\in C\left( \Omega\right) $.
\end{theorem}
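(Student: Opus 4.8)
The plan is to deduce continuity from a quantitative decay‑of‑oscillation estimate on concentric balls, obtained by running the modified De Giorgi iteration of \cite{KRSSh}, and then to iterate that estimate down a geometric sequence of radii, using the superradius bound (\ref{phi-cond}) to force the resulting product to vanish. Fix $x_0\in\Omega$ and choose $R_0>0$ with $2R_0<R$ small enough that the proportional vanishing Poincar\'e inequality (\ref{1-1poinc}) holds on every ball centred at $x_0$ of radius $<2R_0$ and that (\ref{phi-cond}) applies on $(0,2R_0)$. By Theorem \ref{DG}, $u$ is bounded on $B(x_0,R_0)$. For $0<\rho\le R_0$ write $M(\rho)=\sup_{B(x_0,\rho)}u$, $m(\rho)=\inf_{B(x_0,\rho)}u$, $\omega(\rho)=M(\rho)-m(\rho)$; it suffices to show $\omega(\rho)\to 0$ as $\rho\to 0$.

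\emph{Oscillation lemma.} Fix $0<r<R_0$, put $\mu=\tfrac12\big(M(2r)+m(2r)\big)$, and observe that one of $\{u\le\mu\}\cap B(x_0,r)$, $\{u\ge\mu\}\cap B(x_0,r)$ has $w$-measure $\ge\tfrac12 w(B(x_0,r))$; since $-u$ is again a weak solution, we may assume it is the former. Then $v:=(u-\mu)_+$ is a nonnegative weak subsolution on $B(x_0,2r)$ with $\sup_{B(x_0,2r)}v=\tfrac12\omega(2r)$, vanishing on a set $E\subseteq B(x_0,r)$ with $w(E)\ge\tfrac12 w(B(x_0,r))$. I would then run De Giorgi's two‑step scheme on the truncations $v_\ell:=(v-\ell)_+$, $0\le\ell<\tfrac12\omega(2r)$: (i) a Caccioppoli inequality for subsolutions, obtained by testing against $v_\ell\psi^2$ with $\psi$ one of the adapted cutoffs of Definition \ref{def_cutoff} (no admissibility term enters since $\phi=0$), bounding $\int|\nabla_A v_\ell|^2$ by $r^{-2}\int (v-\ell)_+^2\,w$; (ii) the Poincar\'e inequality (\ref{1-1poinc}), which applies to every $v_\ell$ because $v_\ell$ vanishes on $E$, and, together with (i), yields a De Giorgi isoperimetric inequality forcing the relative mass $w(\{v>\ell\}\cap B(x_0,r))\big/w(B(x_0,r))$ to decay as $\ell$ runs through a geometric sequence of levels; (iii) the local $L^2\!\to\!L^\infty$ ``critical mass'' reduction that powers Theorem \ref{DG} (built from (\ref{orl-sob-w})), by which, once that relative mass drops below a threshold $\delta(r)$, $v_\ell\equiv 0$ on $B(x_0,r/2)$. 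Carrying the constants through the modified iteration, $\delta(r)$ is of order $A_{k,N}(r)^{-c}$ and the number of levels one must descend to reach it is $O(\log A_{k,N}(r))$, so one gains $\sup_{B(x_0,r/2)}v\le\big(1-\varepsilon(r)\big)\tfrac12\omega(2r)$ with $\varepsilon(r)\ge A_{k,N}(r)^{-c_0}$ for a structural exponent $c_0>0$. Since $m(r/2)\ge m(2r)$, $M(r/2)\le\mu+\sup_{B(x_0,r/2)}v$, and $\mu-m(2r)=\tfrac12\omega(2r)$, this is the oscillation decay $\omega(r/2)\le\big(1-\tfrac12\varepsilon(r)\big)\,\omega(2r)$.

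\emph{Iteration.} Put $\rho_j=4^{-j}R_0$ and $\varepsilon_j=\tfrac12\varepsilon(\rho_j/2)$, so the oscillation lemma at radius $\rho_j/2$ gives $\omega(\rho_{j+1})\le(1-\varepsilon_j)\,\omega(\rho_j)$ and hence $\omega(\rho_j)\le\omega(R_0)\exp\!\big(-\sum_{i<j}\varepsilon_i\big)$. It remains to see $\sum_j\varepsilon_j=+\infty$, and this is exactly where (\ref{phi-cond}) is used: since $\alpha<(N-1)/(2N)$ the exponent $\gamma:=2\alpha N/(N-1)$ is $<1$, so for $\rho$ small $\big(\varphi(\rho)/\rho\big)^{2/(N-1)}\le C\big[\ln^{(k+2)}(1/\rho)\big]^{\gamma}\le\ln^{(k+2)}(1/\rho)$, whence by (\ref{def AN}) and the identity $\exp^{(k)}\!\big(\ln^{(k+2)}(x)\big)=\ln^{(2)}(x)$,
\[
A_{k,N}(\rho)^2\ \le\ C_1^2\,\exp^{(k)}\!\big(\ln^{(k+2)}(1/\rho)\big)\ =\ C_1^2\,\ln^{(2)}(1/\rho)\qquad(\rho\ \text{small}).
\]
Thus $A_{k,N}(\rho_j)^2\lesssim\ln^{(2)}(4^j/R_0)\asymp\ln j$, so $\varepsilon_j\gtrsim(\ln j)^{-c_0/2}$ and $\sum_j\varepsilon_j=+\infty$. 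Therefore $\omega(\rho_j)\to 0$; since $\rho\mapsto\omega(\rho)$ is nondecreasing, $u$ has an (iterated‑logarithmic) modulus of continuity at $x_0$, and as $x_0$ was arbitrary, $u\in C(\Omega)$.

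\emph{Main obstacle.} The crux is the oscillation lemma with the \emph{explicit} rate $\varepsilon(r)\ge A_{k,N}(r)^{-c_0}$: this means re‑deriving the critical‑mass lemma and the isoperimetric level‑set–shrinking lemma inside the modified De Giorgi scheme of \cite[Ch.~6]{KRSSh}, now with the $L^2$ (rather than $L^1$) forms of the Poincar\'e and Orlicz–Sobolev inequalities and with the full family $\Phi_{k,N}$, while tracking how the thresholds and iteration counts degrade as $r\downarrow 0$. A secondary technical point is to verify that the adapted cutoffs of Definition \ref{def_cutoff} produce a Caccioppoli bound with the correct $r^{-2}$ scaling, so that (\ref{1-1poinc}) genuinely transmits smallness of $v$ on $E$ to smallness of the super‑level‑set measures.
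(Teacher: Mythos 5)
Your overall strategy (oscillation decay estimate, iterated down a geometric sequence of radii, with divergence of $\sum\varepsilon_j$ forcing continuity) is the same as the paper's, which packages the decay estimate in Proposition \ref{Holder thm} and the final summability check in the proof of Theorem \ref{thm:continuity}. However, there is a quantitative error in your oscillation lemma that is fatal to the subsequent iteration.

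You assert that the number of De Giorgi levels one must descend to reach the threshold $\delta(r)\approx A_{k,N}(r)^{-2}$ is $O(\log A_{k,N}(r))$, and deduce a per-scale gain $\varepsilon(r)\gtrsim A_{k,N}(r)^{-c_0}$, i.e.\ polynomially small in $A_{k,N}$. This is not what the isoperimetric argument based on the proportional vanishing Poincar\'e inequality (\ref{1-1poinc}) (Lemma~\ref{DeG Lemma}) delivers. That lemma gives only an \emph{additive} lower bound $w(\mathcal{D}_k)\gtrsim \delta(r)^2\,w(B_{3r})$ on each level's transition-set mass, and since these sets are essentially disjoint and carried by $B_{2r}$, the number of levels is bounded by $k_0\lesssim 1/\delta(r)^2\approx A_{k,N}(r)^4$ --- \emph{linear} in $1/\delta^2$, not logarithmic. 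The resulting per-scale gain is then
\[
\lambda(r)\ \approx\ 2^{-(3+C_3/\delta^2(r))}\ \approx\ 2^{-c\,A_{k,N}(r)^4},
\]
which is \emph{exponentially} small in $A_{k,N}$. With your crude bound $A_{k,N}(\rho)^2\lesssim \ln^{(2)}(1/\rho)$, this gives $\lambda_j\approx 2^{-c(\ln j)^2}$, which is \emph{summable}; the product $\prod_j(1-\lambda_j/2)$ then does \emph{not} vanish, and the argument fails to conclude continuity.

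The paper avoids this by using the strict inequality $\alpha<(N-1)/(2N)$ more carefully than you do: instead of the throwaway bound $\big[\ln^{(k+2)}(1/r)\big]^{\gamma}\le \ln^{(k+2)}(1/r)$ (where $\gamma=2\alpha N/(N-1)<1$), one uses that for every small $\varepsilon>0$ and $a$ large, $\big[\ln^{(k+2)}a\big]^{\gamma}\le \varepsilon\,\ln^{(k-1)}\!\big(\varepsilon\ln^{(3)}a\big)$. Pushed through $\exp^{(k)}$ in (\ref{def AN}) this yields $1/\delta^2(r)\lesssim \big[\ln^{(2)}(1/r)\big]^{2\varepsilon}$ rather than $\big[\ln^{(2)}(1/r)\big]^{2}$, and hence $\lambda_j\gtrsim \exp\!\big(-C\big[\ln^{(2)}(1/r_j)\big]^{2\varepsilon}\big)\gtrsim j^{-C\gamma'}$ for some $\gamma'<1$ once $\varepsilon$ is chosen small, which is \emph{not} summable. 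That sharpened estimate is exactly where the hypothesis $\alpha<(N-1)/(2N)$ is consumed, and it is the step your proposal is missing.
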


The above results justify the following definition
\begin{definition}\label{def:admiss}
	The weight $w$ is \emph{$\Phi$-admissible} if the following Orlicz-Sobolev inequality holds for all $ v \in \left(W^{1,2}_{w}\right)_{0}(B)$
	\begin{equation}\label{weight-orl-sob}
	\left\Vert v\right\Vert _{L^{\Phi }_{\mu_{B}}}\leq C\varphi \left(
	r\left( B\right) \right) \left\Vert \nabla v\right\Vert _{L^{2}_{\mu_{B}} }
	\end{equation}%
	holds with superradius $\varphi(r)$.
	If, in addition, the weighted Poincar\'{e} inequality 
	\begin{equation}\label{weight-poinc}
	\int_{B}|v-v_B|^2 d\mu\leq Cr\int_{\frac{3}{2}B}\left\vert \nabla v\right\vert^2\,d\mu,
	\end{equation}
	where $v_B=\frac{1}{w(B)}\int_{B}vd\mu$
	(\ref{1-1poinc}) holds for all $v \in W^{1,2}_{w}(B)$, the weight $w$ is called \emph{strongly $\Phi$-admissible}.
\end{definition}

Indeed, if the matrix $A$ has a degeneracy controlled by weight $w$, i.e. it satisfies (\ref{A-weighted}), then (\ref{weight-orl-sob}) implies (\ref{orl-sob-w}). Moreover, in this case it follows from Section \ref{sec:poinc} that (\ref{weight-poinc}) implies (\ref{1-1poinc}). Theorems \ref{DG} and \ref{thm:continuity} then imply local boundedness and continuity of weak solutions, thus justifying Definition \ref{def:admiss}. 
\subsection{Relation to previous results}

We establish sufficient conditions for local boundedness and continuity of weak solutions. Despite the seemingly abundant number of such results, our theorems stand out in two ways. (1) We require the weakest version of a Sobolev inequality considered elsewhere in the literature. This makes the implementation of the DeGiorgi iteration scheme quite technical, and requires careful estimates at every step of the proof. As a payoff it possibly allows for more degenerate operators. (2) Our theory generalizes several existing results (see Section \ref{sec:ex}) , demonstrating that it is not vacuous.

\section{Families of Young functions}\label{sec:fam}

We now describe a two-parameter family of Young functions, $\{\Phi_{k,N}\}$, which gives rise to Orlicz-Sobolev norms used in our adaptation of DeGiorgi iteration. Let $N> 1$, $k=1,2,\dots$, and define
\begin{equation}\label{phi-def}
\Phi_{k,N}(t)\equiv \left\{ 
\begin{array}{ccc}
t^2\cdot\left(\ln t^2\right)^{2}\cdot \left(\ln\ln t^2\right)^{2}\cdot\dots\cdot(\ln^{(k)} t^2)^{2N} & \text{ if } & t^2\geq E=E_{k,N}=\exp^{(k)}(2N) \\ 
t^2\cdot\left(\ln E\right)^{2}\cdot \left(\ln\ln E\right)^{2}\cdot\dots\cdot(\ln^{(k)} E)^{2N} & \text{ if } & 0\leq t^2\leq E=E_{k,N}=\exp^{(k)}(2N)%
\end{array}%
\right. , 
\end{equation} 
where we use the notation $f^{(k)}$ for the $k^{th}$ composition of $f$, $f\circ f\circ\dots\circ f$.
\begin{remark}
	The family $\{\Phi_{k,N}\}$ is a generalized $L^2$ version of the family $\{\Phi_N\}$ considered in \cite{KRSSh}. More precisely,
	\[
	\Phi_N(t)\approx\sqrt{\Phi_{1,N}(t)},
	\]
	where the approximate equality is due to the fact that we replaced the argument $t$ in the logarithms by $t^2$. This is done to reduce notational clutter later when considering the function $\Phi^{0}_{k,N}(t)=\Phi_{k,N}(\sqrt{t})$, and does not change the behavior of $\Phi_{k,N}(t)$ for large $t$.
\end{remark}
We now define $\Phi^{0}_{k,N}(t)=\Phi_{k,N}(\sqrt{t})$, i.e.
\begin{equation}\label{phi0-def}
\Phi^{0}_{k,N}(t)\equiv \left\{ 
\begin{array}{ccc}
t\cdot\left(\ln t\right)^{2}\cdot \left(\ln\ln t\right)^{2}\cdot\dots\cdot(\ln^{(k)} t)^{2N} & \text{ if } & t\geq E=E_{k,N}=\exp^{(k)}(2N) \\ 
t\cdot\left(\ln E\right)^{2}\cdot \left(\ln\ln E\right)^{2}\cdot\dots\cdot(\ln^{(k)} E)^{2N} & \text{ if } & 0\leq t\leq E=E_{k,N}=\exp^{(k)}(2N)%
\end{array}%
\right. , 
\end{equation} 
and establish some useful properties of $\Phi_{0}\equiv\Phi^{0}_{k,N}$.
First note that for each $k$ and $N$, $\Phi_0$ is convex, increasing, $\Phi_{0}(0)=0$ and $\frac{\Phi_{0}(t)}{t}\to \infty$ as $t\to\infty$. Next, since $\Phi_{0}$ is the linear extension of a submultiplicative Young function defined on $[E,\infty)$, it is submultiplicative on $[0,\infty)$ provided it is submultiplicative on $[E,\infty)$ (see Chapter 4 of \cite{KRSSh}). Let $s,t\geq \exp^{(k)}(2N)$ and $N\geq 1$, then using the fact that $a+b\leq ab$ if $a,b\geq 2$ we have
\begin{align*}
\ln(st)=\ln s +\ln t&\leq [\ln s][\ln t]\\
\ln^{(2)}(st)=\ln(\ln s +\ln t)\leq \ln^{(2)} s +\ln^{(2)} t&\leq \left[\ln^{(2)}s\right]\left[\ln^{(2)}t\right]\\
&\dots\\
\ln^{(k)}(st)&\leq \left[\ln^{(k)}s\right]\left[\ln^{(k)}t\right].
\end{align*}
This gives 
\begin{align*}
&(st)\cdot\left(\ln (st)\right)^{2}\cdot \left(\ln\ln (st)\right)^{2}\cdot\dots\cdot(\ln^{(k)} (st))^{2N}\\
&\quad\leq \left[t\cdot\left(\ln t\right)^{2}\cdot \left(\ln\ln t\right)^{2}\cdot\dots\cdot(\ln^{(k)} t)^{2N}\right]\left[s\cdot\left(\ln s\right)^{2}\cdot \left(\ln\ln s\right)^{2}\cdot\dots\cdot(\ln^{(k)} s)^{2N}\right],
\end{align*}
so $\Phi^{0}_{k,N}$ is submultiplicative on $[E,\infty)$, and therefore on $[0,\infty)$.\\
Next, we derive estimates on the conjugate of $\Phi_{0}$ we will need later in the implementation of the DeGiorgi iteration. 
By \cite[Lemma 47]{S20} we have the following inequality for the conjugate Young function $\widetilde{\Phi}_{0}$
\[
\widetilde{\Phi}_{0}\left(\frac{\Phi_0(t)}{t}\right)\leq \Phi_{0}(t)\leq \widetilde{\Phi}_{0}\left(\frac{2\Phi_0(t)}{t}\right)\quad \forall t>0.
\]
This immediately gives
\[
\widetilde{\Phi}_{0}^{-1}\left(\Phi_0(t)\right)\geq \frac{\Phi_0(t)}{t},
\]
and thus
\begin{equation}\label{phi-0-est}
\widetilde{\Phi}_{0}^{-1}\left(s\right)\geq \frac{s}{\Phi_{0}^{-1}(s)}.
\end{equation}
We now estimate $\Phi_{0}^{-1}(s)$ for $s\geq \Phi_{0}(E)$. Let $t\geq E$, $s=\Phi_0(t)$, we have
\begin{align*}
s&=	t\cdot\left(\ln t\right)^{2}\cdot \left(\ln\ln t\right)^{2}\cdot\dots\cdot(\ln^{(k)} t)^{2N} \\
\ln s&=\ln t+2\ln\ln t+\dots +2N\ln^{(k+1)} t\\
&\leq 2(k+N)\ln t.
\end{align*}
This gives
\[
\ln t\geq \frac{\ln s}{C_{k,N}},\quad \text{and}\quad  \ln\ln s\leq \ln C_{k,N}+\ln\ln t.
\]
Since $t\geq E=\exp^{(k)}(2N)$, the last inequality can be rewritten as
\[
\ln\ln s\leq  \tilde{C}_{k,N}\ln\ln t,
\]
for some constant $\tilde{C}_{k,N}$ depending only on $k$ and $N$. Similarly, we obtain with some other constant $C_{k,N}$
\[
\ln^{(j)}s\leq C_{k,N}\ln^{(j)}t,\  \  \forall 1\leq j\leq k.
\]
Altogether this gives
\[
t=\frac{s}{\left(\ln t\right)^{2}\cdot \left(\ln\ln t\right)^{2}\cdot\dots\cdot(\ln^{(k)} t)^{2N}}\leq \frac{C s}{\left(\ln s\right)^{2}\cdot \left(\ln\ln s\right)^{2}\cdot\dots\cdot(\ln^{(k)} s)^{2N}},
\]
so that
\[
\Phi_{0}^{-1}(s)\leq\frac{C s}{\left(\ln s\right)^{2}\cdot \left(\ln\ln s\right)^{2}\cdot\dots\cdot(\ln^{(k)} s)^{2N}}\quad\text{for}\  s\geq \Phi_0(E),
\]
where the constant $C$ depends only on $k$ and $N$. Substituting into (\ref{phi-0-est}) we obtain 
\begin{equation}\label{use}
\widetilde{\Phi}_{0}^{-1}\left(s\right)\geq C_{k,N}\left(\ln s\right)^{2}\cdot \left(\ln\ln s\right)^{2}\cdot\dots\cdot(\ln^{(k)} s)^{2N}
\end{equation}
for $s\geq \Phi_0(E)=\exp^{(k)}(2N)\left(\exp^{(k-1)}(2N)\right)^2\dots \left(\exp(2N)\right)^2(2N)^{2N}$.

\subsection{Families of cutoff functions}

In this section we describe the family of cutoff functions to be used in our adaptation of the DeGiorgi iteration scheme. When a classical $(q,p)$ Sobolev inequality with $q>p$ is available, one can work with a sequence of balls $\frac{1}{2}B\subset \dots\subset B_{j+1}\subset B_{j}\subset \dots\subset B$ with the radii
\[
r_1=r(B)=r,\  r_{j}=\frac{r}{2}+ \frac{r}{2^{j}},\   j=1,\dots.
\]
The corresponding sequence of cutoff functions $\{\psi_j\}$ can be chosen to satisfy
\[
\psi_j=1\  \text{on}\  B_{j+1},\  \psi_j=0\  \text{on}\  B_{j}^{c},\  	\left\Vert \nabla \psi _{j}\right\Vert _{\infty }\lesssim \frac{2^{j }}{r}.
\]
If one then uses this sequence in a Caccioppoli inequality like (\ref{Cacc_inhomog}), a factor of $2^j$ arises on the right hand side of that inequality. This factor can be handled by the Moser or DeGiorgi iteration, when the classical $(q,p)$ Sobolev inequality with $q>p$ is used. However, when working with a weaker, Orlicz-Sobolev inequality, this factor is too large for the iteration to converge. Roughly speaking, the smaller the Orlicz bump, the smaller the factor we can allow on the right hand side of the Caccioppoli inequality. The family of cutoff functions we construct below is adapted to the family of Young functions defined in (\ref{phi-def}). Fix $k\in\mathbb{N}$ and $N>1$, define the sequence of radii $\{r_j\}$ as follows
\begin{equation}\label{radii}
r_{1}=r,\  r_{\infty }\equiv
\lim_{j\rightarrow \infty }r_{j}=\frac{1}{2}r,\  r_j-r_{j+1}=\frac{cr}{\kappa_{k,N}(j)}\equiv\begin{cases}
\frac{cr}{j^2},\quad&\text{if}\  j\leq E=\exp^{(k)}(2N)\\
\frac{cr}{j\ln j\ln\ln j\dots(\ln^{(k-1)}j)^{\frac{N+1}{2}}},\quad&\text{if}\  j> E=\exp^{(k)}(2N),
\end{cases}
\end{equation}
where we introduced the notation
\begin{equation}\label{def:kappa}
\kappa_{k,N}(j)\equiv\begin{cases}
j^2,\quad&\text{if}\  j\leq E=\exp^{(k)}(2N)\\	
j\ln j\dots \ln^{(k-2)}j(\ln^{(k-1)}j)^{\frac{N+1}{2}},\quad&\text{if}\  j> E=\exp^{(k)}(2N).
\end{cases}
\end{equation}
Here the constant $c>0$ is uniquely determined by the condition $r_\infty=r/2$. The family $\{r_j\}$ is parametrized by $k$ and $N$, but we omit this dependence to simplify the notation.

\begin{definition}[Adapted sequence of accumulating Lipschitz functions]
	\label{def_cutoff}Let $\Omega $ be a bounded domain in $\mathbb{R}^{n}$. Fix $k\in\mathbb{N}$, $N>1$,
	$r>0$ and $x\in \Omega $. Let $\{r_j\}$ be a sequence of radii satisfying (\ref{radii}). We define an $\left(A,d,w;k,N\right) $-\emph{%
		adapted} sequence of Lipschitz cutoff functions $\left\{ \psi _{j}\right\}
	_{j=1}^{\infty }$ at $\left( x,r\right) $, along with sets $%
	B(x,r_{j})\supset \mathrm{supp}\psi _{j}$, to be a sequence satisfying $%
	\psi _{j}=1$ on $B(x,r_{j+1})$,  and 
	\[
	\left\Vert \frac{|\nabla_{A}
		\psi_{j}|}{w^{1/2}} \right\Vert _{L^{\infty }}\lesssim \frac{\kappa_{k,N}(j)}{r}	
	\]
	with $\nabla _{A}$ as in (\ref{def A grad}), $\kappa_{k,N}(j)$ as in (\ref{def:kappa}), and $w$ from (\ref{A-cond}).
\end{definition}

\section{Proofs of main results}\label{sec:proofs}

We first prove the following Caccioppoli inequality

\begin{prop}[Caccioppoli]\label{prop:cacc}
	Suppose
	for some ball $B\subset \Omega$, a constant $P>0$, and a nonnegative function $v\in L^{2}(\Omega)$
	there holds 
	\begin{equation}
	\left\Vert \phi \right\Vert _{X}\equiv 	\left\Vert \phi \right\Vert _{X(B)}\leq Pv(x),\quad \text{a.e.}\ x\in
	\{u>0\}\cap B.  \label{P_bound}
	\end{equation}%
	Then 
	\begin{equation}
	\int |\nabla_{A}(\psi u_{+})|^{2}dx\leq C\left( \left\Vert \frac{|\nabla_{A}
		\psi|^{2}}{w} \right\Vert _{L^{\infty }}+P^{2}\right) \int \left(
	u_{+}^{2}+v^{2}\right) wdx ,  \label{Cacc_inhomog}
	\end{equation}%
	for every weak subsolution $u$ of $Lu=\phi$ and $\psi\in Lip_{c}(B)$.
\end{prop}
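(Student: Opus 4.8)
The plan is to use the standard Caccioppoli device: test the weak subsolution inequality against $g = \psi^2 u_+$, where $\psi \in Lip_c(B)$, and then absorb the cross terms. First I would note that $g$ is a legitimate nonnegative test function in $(W_A^{1,2})_0(B)$ since $u_+ \in W_A^{1,2}$ whenever $u$ is (truncation preserves the space) and $\psi$ is Lipschitz with compact support. Plugging into the subsolution inequality $-\int \nabla g^{\func{tr}} A \nabla u \geq \int \phi g$ and using $\nabla_A u = \nabla_A u_+$ a.e.\ on $\{u>0\}$ while $\nabla_A u_+ = 0$ a.e.\ on $\{u \leq 0\}$, the left side becomes
\[
-\int \left( 2\psi u_+ \nabla_A \psi + \psi^2 \nabla_A u_+ \right)^{\func{tr}} \cdot \nabla_A u_+ \, dx \;=\; -2\int \psi u_+ \, \nabla_A\psi^{\func{tr}} \nabla_A u_+ \, dx - \int \psi^2 |\nabla_A u_+|^2 dx,
\]
(up to the bounded constants in the equivalence \eqref{struc_0} between $A$ and $B^{\func{tr}}B$), so rearranging yields
\[
\int \psi^2 |\nabla_A u_+|^2 dx \;\leq\; C\int \psi |u_+| \, |\nabla_A\psi| \, |\nabla_A u_+| \, dx \;+\; C\left| \int \phi\, \psi^2 u_+ \, dx \right|.
\]

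Next I would handle the two terms on the right. For the gradient cross term, apply Cauchy--Schwarz pointwise and then Young's inequality $ab \leq \frac{1}{2}a^2 + \frac{1}{2}b^2$ with a small parameter to split off $\frac{1}{2}\int \psi^2 |\nabla_A u_+|^2$, which gets absorbed into the left side; the remainder is bounded by $C\int |\nabla_A\psi|^2 u_+^2 \, dx = C\int \frac{|\nabla_A\psi|^2}{w} u_+^2 \, w\,dx \leq C\left\| \frac{|\nabla_A\psi|^2}{w}\right\|_{L^\infty} \int u_+^2 w\,dx$. For the $\phi$ term, this is exactly where the $A$-admissibility hypothesis and the bound \eqref{P_bound} are used: since $\psi^2 u_+ \in (W_A^{1,2})_0(B)$ and vanishes outside $\{u>0\}\cap B$, the definition of $\|\phi\|_{X(B)}$ gives
\[
\left| \int_B \phi\, \psi^2 u_+ \, dx \right| \;\leq\; \|\phi\|_{X(B)} \, w\big(\{u>0\}\cap B\big)^{1/2} \, \|\nabla_A(\psi^2 u_+)\|_{L^2(B)}.
\]
Here I would estimate $\|\nabla_A(\psi^2 u_+)\|_{L^2} \leq C\left(\|\frac{|\nabla_A\psi|}{w^{1/2}}\|_{L^\infty} \|u_+ w^{1/2}\|_{L^2} + \|\psi \nabla_A u_+\|_{L^2}\right)$ by the product rule, use \eqref{P_bound} to replace $\|\phi\|_{X}$ by $Pv$ and note $w(\{u>0\}\cap B)^{1/2} \|\phi\|_X \leq P (\int_{\{u>0\}\cap B} v^2 w)^{1/2}$ after squaring appropriately, and then apply Young's inequality once more with a small parameter to absorb the $\|\psi \nabla_A u_+\|_{L^2}$ factor into the left side. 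What survives is $\leq C P^2 \int (u_+^2 + v^2) w \, dx + C\|\frac{|\nabla_A\psi|^2}{w}\|_{L^\infty}\int u_+^2 w\,dx$, plus the absorbed term. Finally, since $|\nabla_A(\psi u_+)|^2 \leq 2\psi^2|\nabla_A u_+|^2 + 2|\nabla_A\psi|^2 u_+^2$, combining everything gives \eqref{Cacc_inhomog}.

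The main obstacle I anticipate is the bookkeeping around the $\phi$ term: one must be careful that the quantity $w(\{v\neq 0\})$ appearing in the definition of $\|\cdot\|_X$ is controlled by the support of $\psi^2 u_+$, which lies in $\{u>0\}\cap B$ — so the natural weighted $L^2$ norm $\int_{\{u>0\}\cap B} v^2 w$ appears, and one needs $v^2 \geq$ (something comparable to $\|\phi\|_X^2/P^2$) there via \eqref{P_bound} to close the estimate cleanly. A secondary technical point is justifying that $\psi^2 u_+$ and $\psi u_+$ genuinely lie in the right spaces (density of Lipschitz functions, the chain/product rule for $\nabla_A$ applied to products of Lipschitz and $W_A^{1,2}$ functions), which is routine but should be acknowledged. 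All the parameters in the Young's inequality absorptions depend only on the ellipticity constants $c, C, k$ from \eqref{A-cond}--\eqref{struc_0}, so the final constant $C$ in \eqref{Cacc_inhomog} is of the asserted form.
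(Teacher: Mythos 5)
Your proof is correct and follows essentially the same route as the paper: test the subsolution inequality with $g = \psi^2 u_+$, use the $A$-admissibility definition to bound $\left|\int \phi\,\psi^2 u_+\right|$ in terms of $\|\nabla_A(\psi^2 u_+)\|_{L^2}$ and $w(\{u>0\}\cap B)^{1/2}$, expand $\nabla_A(\psi^2 u_+)$ by the product rule, absorb via Young's inequality, and finally convert $w(\{u>0\}\cap B)\|\phi\|_X^2$ to $P^2\int_B v^2 w$ using \eqref{P_bound}. The only cosmetic difference is that the paper works directly with $\int|\nabla_A(\psi u_+)|^2$ via the identity $\nabla(\psi u_+)^{\func{tr}}A\nabla(\psi u_+) = \nabla(\psi^2 u_+)^{\func{tr}}A\nabla u_+ + u_+^2\nabla\psi^{\func{tr}}A\nabla\psi$, whereas you first bound $\int\psi^2|\nabla_A u_+|^2$ and then pass to $\int|\nabla_A(\psi u_+)|^2$ at the end; both are valid.
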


\begin{proof}
	By definition of weak subsolution we have
	\[
	-\int \nabla g\cdot A\nabla u\geq \int \phi g
	\]
	for any nonnegative function $g\in W^{1,2}_0(B)$. Let $g=\psi^2u_{+}$, this gives
	\[
	-\int \nabla \left(\psi^2u_{+}\right)\cdot A\nabla u\geq \int \phi \psi^2u_{+}.
	\]
	Expanding the left hand side and rearranging we obtain using Definition \ref{def A admiss new}, condition (\ref{struc_0}), and Young's inequality
	\begin{align*}
	\int \lv\nabla_{A} \left(\psi u_{+}\right)\rv^{2}&\leq C	\int \nabla \left(\psi u_{+}\right)\cdot A\nabla  \left(\psi u_{+}\right)\leq C\int \left\vert\nabla_{A}\psi\right\vert^2u_{+}^{2} -C\int \phi \psi^2u_{+}\\
	&\leq C\int \frac{\left\vert\nabla_{A}\psi\right\vert^2}{w}u_{+}^{2}w +Cw\left(B\cap\{\psi^2 u_{+}\neq 0 \}\right)^{\frac{1}{2}}\lVert \phi\rVert_{X}\lVert \nabla_{A}\left(\psi^2 u_{+}\right)\rVert_{L^2}\\
	&\leq C\left\Vert \frac{|\nabla_{A}
		\psi|^{2}}{w} \right\Vert _{L^{\infty }}\int_{\supp \psi} u_{+}^2w+\frac{1}{\ve}w\left(B\cap\{u> 0 \}\right)\lVert \phi\rVert_{X}^{2}+\ve\int \lv \nabla_{A}\left(\psi^2 u_{+}\right)\rv^{2}
	\end{align*}
	for any $\ve>0$. Next, we have
	\begin{align*}
	\int \lv \nabla_{A}\left(\psi^2 u_{+}\right)\rv^{2}&=\int \lv \psi u_{+}\nabla_{A}\psi+\psi\nabla_{A}\left(\psi u_{+}\right)\rv^{2}\\
	&\leq  2\left\Vert \frac{|\nabla_{A}
		\psi|^{2}}{w} \right\Vert _{L^{\infty }}\int\psi^2u_{+}^{2}w+2\left\Vert\psi\right\Vert _{L^{\infty }}\int \lv\nabla_{A} \left(\psi u_{+}\right)\rv^{2}.
	\end{align*}
	Combining with the above gives
	\begin{equation}\label{eq:inter}
	\int \lv\nabla_{A} \left(\psi u_{+}\right)\rv^{2}\leq C\left\Vert \frac{|\nabla_{A}
		\psi|^{2}}{w} \right\Vert _{L^{\infty }}\int_{\supp \psi} u_{+}^2w+\frac{1}{\ve}w\left(B\cap\{u> 0 \}\right)\lVert \phi\rVert_{X}^{2}+\ve C\int \lv\nabla_{A} \left(\psi u_{+}\right)\rv^{2}.
	\end{equation}
	Choosing $\ve$ sufficiently small and absorbing the last term on the right hand side to the left gives
	\[
	\int \lv\nabla_{A} \left(\psi u_{+}\right)\rv^{2}\leq C\left\Vert \frac{|\nabla_{A}
		\psi|^{2}}{w} \right\Vert _{L^{\infty }}\int_{\supp \psi} u_{+}^2w+Cw\left(B\cap\{u> 0 \}\right)\lVert \phi\rVert_{X}^{2}.
	\]
	We now use (\ref{P_bound}) to obtain
	\[
	w\left(B\cap\{u> 0 \}\right)\lVert \phi\rVert_{X}^{2}=\int_{B\cap\{u> 0 \}}\lVert \phi\rVert_{X}^{2}w(x)dx\leq P^2\int_{B}v^2(x)w(x)dx,
	\]
	which combined with (\ref{eq:inter}) gives
	\[
	\int \lv\nabla_{A} \left(\psi u_{+}\right)\rv^{2}\leq C\left\Vert \frac{|\nabla_{A}
		\psi|^{2}}{w} \right\Vert _{L^{\infty }}\int_{\supp \psi} u_{+}^2w+CP^2\int_{B}v^2w.
	\]
	This shows (\ref{Cacc_inhomog}).
\end{proof}

\begin{remark}
	Note that (\ref{A-cond}) implies that
	\[
	\left\Vert \frac{|\nabla_{A}
		\psi|^{2}}{w} \right\Vert _{L^{\infty }}\lesssim \left\Vert \nabla
	\psi \right\Vert^{2} _{L^{\infty }}.
	\]
\end{remark}

\subsection{Local boundedness}

\begin{proof}[Proof of Theorem \ref{DG}]
	Without loss of generality, we may assume that $B=B\left( 0,r\right) $ is a
	ball centered at the origin with radius $r>0$. Let $\left\{ \psi
	_{j}\right\} _{j=1}^{\infty }$ be an \emph{adapted} sequence of Lipschitz
	cutoff functions at $\left( 0,r\right) $ as in Definition \ref{def_cutoff}. Following DeGiorgi (\cite{DeG}, see also 
	\cite{CaVa}) and \cite{KRSSh}, we consider the family of truncations 
	\begin{equation}
	\begin{split}\label{truncations}
	&u_{j}=(u-C_{j})_{+},\quad C_{j}=\tau \left\Vert \phi \right\Vert _{X}\left(
	1-c_j\right),\\
	&c_j=\begin{cases}
	c(j+1)^{-1}&\quad \text{if}\  1\leq j+1< E=\exp^{(k)}(2N)\\
	\left(\ln^{(k-1)}(j+1)\right) ^{-\frac{N-1}{2}} &\quad \text{if}\  j+1\geq E=\exp^{(k)}(2N) 
	\end{cases}
	\end{split},
	\end{equation}%
	where $c>0$ is a constant depending on $k$ and $N$ chosen to satisfy the condition $c_{j+1}<c_{j}$ for all $j$, and denote the $L^{2}$ norm of the truncation $u_{j}$ by 
	\begin{equation}
	U_{j}\equiv \int_{B_{j}}|u_{j}|^{2}d\mu_B ,  \label{def Uk}
	\end{equation}%
	where $d\mu_B =\frac{w(x)dx}{w(B(0,r))}=\frac{w(x)dx}{w(B) }$ and $w(B)=\int_{B}w(x)dx$ are
	independent of $k$. In $C_j$ we have introduced a parameter $\tau \geq 1$ that
	will be used later for rescaling. We will assume $||\phi||_X>0$, otherwise we replace it with a parameter $m>0$ and take the limit $m\to 0$ at the end of the argument.
	
	Using H\"{o}lder's inequality for Young functions we can write 
	\begin{equation}
	\int \left( \psi _{j+1}u_{j+1}\right) ^{2}d\mu \leq C||\left( \psi
	_{j+1}u_{j+1}\right) ^{2}||_{L^{\Phi_{0} }\left( B_{j};\mu_B \right) }\cdot
	||1||_{L^{\tilde{\Phi}_{0}}\left( \{\psi _{j+1}u_{j+1}>0\};\mu_B \right) }\ ,
	\label{hol}
	\end{equation}%
	where the norms are taken with respect to the measure $\mu_B $. Consider now a Young function $\Phi$ defined by 
	\[
	\Phi(t)=\Phi_0(t^2),
	\]
	with $\Phi_0$ defined in (\ref{phi0-def}), and note that if $u\in L_{w}^{\Phi}(B)$ then $u^2\in L_{w}^{\Phi_0}(B)$ and 
	\begin{equation}\label{phi_ineq}
	||u^2||_{L^{\Phi_0}(\mu)}\leq ||u||^{2}_{L^{\Phi}(\mu)}.
	\end{equation}
	Indeed, we have
	\[
	\int \Phi_0\left(\frac{u^2}{||u||^{2}_{L^{\Phi}(\mu)}}\right)d\mu = 	\int \Phi\left(\frac{u}{||u||_{L^{\Phi}(\mu)}}\right)d\mu \leq 1,
	\]
	which implies (\ref{phi_ineq}).
	
	Thus, for the first
	factor on the right of (\ref{hol}) we have, using (\ref{phi_ineq}) and the Orlicz Sobolev inequality (\ref{orl-sob-w}),
	\begin{align}\label{os_applied}
	||\left( \psi _{j+1}u_{j+1}\right) ^{2}||_{L^{\Phi_0 }\left( B_{j};\mu_B \right)} &\leq ||\left( \psi _{j+1}u_{j+1}\right) ||^{2}_{L^{\Phi }\left( B_{j};\mu_B \right)} \\
	&\leq C\frac{\varphi (r)^2}{w(B)}\int \left\vert \nabla_{A} \left( \psi
	_{j+1}u_{j+1}\right) \right\vert^{2}\ dx.\notag
	\end{align}
	
	We would now like to apply the Caccioppoli inequality (\ref{Cacc_inhomog})
	with an appropriate function $v$, and
	therefore we need to establish estimate (\ref{P_bound}). For that we observe
	that 
	\begin{eqnarray}
	u_{j+1} &>&0\Longrightarrow u>C_{j+1}=\tau \left\Vert \phi \right\Vert
	_{X}\left( 1-c_{j+1}\right)  \label{uk_support}
	\\
	&\Longrightarrow &u_{j}=\left( u-C_{j}\right) _{+}>\tau \left\Vert \phi
	\right\Vert _{X}\left[c_{j}-c_{j+1}\right] \notag.
	\end{eqnarray}%
	We now need to estimate $c_{j}-c_{j+1}$. Using the definition (\ref{truncations}) we first obtain for $0\leq j<E-1$
	\[
	c_{j}-c_{j+1}=\frac{c}{(j+1)(j+2)}\geq \frac{c}{(j+2)^2}.
	\]
	For $j\geq E$
	using the mean value theorem we have
	\begin{eqnarray*}
		c_{j}-c_{j+1}&=&	\left(\ln^{(k-1)}(j+1)\right) ^{-\frac{N-1}{2}} -\left(\ln^{(k-1)}(j+2)\right) ^{-\frac{N-1}{2}} \\
		&=&\frac{(N-1)}{2}\cdot\frac{1}{\theta\ln\theta\dots \ln^{(k-2)}\theta(\ln^{(k-1)}\theta)^{\frac{N+1}{2}}}\ \ \ \ \ \text{where }j+1\leq\theta\leq j+2 \notag\\
		&\geq &\frac{(N-1)}{2}\cdot\frac{1}{(j+2)\ln(j+2)\dots \ln^{(k-2)}(j+2)(\ln^{(k-1)}(j+2))^{\frac{N+1}{2}}}\notag.
	\end{eqnarray*}
	Finally, if $j+1\geq E$ and $j<E$, we have
	\[
	c_{j}-c_{j+1}\geq \frac{1}{2(j+1)}.	
	\]
	Altogether we conclude that for some constant $C=C(N)>0$, bounded uniformly for $N\geq 1$, there holds
	\begin{equation}\label{c est}
	c_{j}-c_{j+1}\geq \frac{(N-1)}{C\kappa_{j+2}},
	\end{equation}
	where $\kappa_{j+2}=\kappa_{k,N}(j+2)$ is defined in (\ref{def:kappa}).
	This implies 
	\begin{equation}\label{phi_x_bound}
	\left\Vert \phi \right\Vert _{X}\leq \frac{C}{\tau (N-1) \kappa_{j+2}}u_{j}\leq \frac{C}{(N-1) \kappa_{j+2}}u_{j},
	\end{equation}%
	which is (\ref{P_bound}) with $v=u_{j}$ and $P=\frac{C}{\tau (N-1) \kappa_{j+2}}$.
	Note that we have used our
	assumption that $\tau \geq 1$ in the display above.
	Thus by the Caccioppoli inequality (\ref{Cacc_inhomog}) and Definition \ref{def_cutoff} we have with $d\mu=w(x)dx$, 
	\begin{align*}
	\int \left\vert \nabla_{A} \left( \psi _{j+1}u_{j+1}\right) \right\vert
	^{2}\ dx
	&\leq C(N)\left( \left\Vert \frac{|\nabla_{A}
		\psi_{j+1}|^{2}}{w} \right\Vert _{L^{\infty }}+	\kappa_{j+2}^{2}\right)\int_{B_{j}}\left( u_{j+1}^{2}+u_{j}^{2}\right)
	d\mu \\
	&\leq \frac{C}{r^2}	\kappa_{j+2}^{2}
	\int_{B_{j}}u_{j}^{2}d\mu \ .
	\end{align*}%
	Combining with (\ref{os_applied}) we obtain%
	\begin{equation} \label{sob_cac} 
	||\left( \psi _{j+1}u_{j+1}\right) ^{2}||_{L^{\Phi }\left( B_{j};\mu_B \right)
	} \leq C\left[\frac{\varphi (r)}{r}\right]^2\kappa_{j+2}^{2}
	\int_{B_{j}}u_{j}^{2}d\mu_B \ .
	\end{equation}
	For the second factor in (\ref{hol}) we claim 
	\begin{equation}
	||1||_{L^{\tilde{\Phi}_0}\left( \{\psi _{j+1}u_{j+1}>0\}d\mu_B \right) }\leq
	\Gamma \left( \left\vert \left\{ \psi _{k}u_{j}>\frac{(N-1)}{C}\kappa_{j+2}^{-1}\right\} \right\vert _{\mu_B
	}\right) ,  \label{super}
	\end{equation}%
	with the notation%
	\begin{equation}
	\Gamma \left( t\right) \equiv \frac{1}{\tilde{\Phi}_{0}^{-1}\left( \frac{1}{t}%
		\right) }.  \label{psi_inv}
	\end{equation}%
	First recall 
	\begin{equation*}
	||f||_{L^{\tilde{\Phi}_0}(X)}\equiv \inf \left\{ a:\int_{X}\tilde{\Phi}_0\left( 
	\frac{f}{a}\right) \leq 1\right\} ,
	\end{equation*}%
	and note 
	\begin{equation*}
	\int_{\{\psi _{j+1}u_{j+1}>0\}}\tilde{\Phi}_0\left( \frac{1}{a}\right) =\tilde{%
		\Phi}_0\left( \frac{1}{a}\right) \left\vert \left\{ \psi
	_{j+1}u_{j+1}>0\right\} \right\vert _{\mu }.
	\end{equation*}%
	Now take 
	\begin{equation*}
	a=\Gamma \left( \left\vert \left\{ \psi _{j+1}u_{j+1}>0\right\} \right\vert
	_{\mu }\right) \equiv \frac{1}{\tilde{\Phi}_0^{-1}\left( \frac{1}{\left\vert
			\left\{ \psi _{j+1}u_{j+1}>0\right\} \right\vert _{\mu }}\right) }
	\end{equation*}%
	which obviously satisfies 
	\begin{equation*}
	\int_{\left\{ \psi _{j+1}u_{j+1}>0\right\} }\tilde{\Phi}_0\left( \frac{1}{a}%
	\right) d\mu =1.
	\end{equation*}%
	This gives 
	\begin{equation*}
	||1||_{L^{\tilde{\Phi}_0}\left( \left\{ \psi _{j+1}u_{j+1}>0\right\} d\mu
		\right) }\leq a=\Gamma \left( \left\vert \left\{ \psi
	_{j+1}u_{j+1}>0\right\} \right\vert _{\mu }\right) ,
	\end{equation*}%
	and to conclude (\ref{super}) we only need to observe that 
	\begin{equation*}
	\left\{ \psi _{j+1}u_{j+1}>0\right\} \subset \left\{ \psi _{j+1}u_{j}>\frac{\tau\psi_{j+1} \left\Vert \phi \right\Vert _{X}(N-1)}{C\kappa_{j+2}}\right\},
	\end{equation*}%
	which follows from (\ref{uk_support}) and (\ref{c est}).
	Next we use Chebyshev's inequality to obtain 
	\begin{equation}
	\left\vert \left\{ \psi _{j+1}u_{j}>\frac{c\psi_{j+1}\tau \left\Vert \phi \right\Vert _{X}(N-1)}{\kappa_{j+2}}\right\} \right\vert _{\mu }
	\leq \gamma \kappa_{j+2}^{2}\int \left( \psi _{j}u_{j}\right) ^{2}d\mu ,
	\label{cheb}
	\end{equation}%
	where $\gamma =\frac{c}{\tau ^{2}\left\Vert \phi \right\Vert
		_{X}^{2}(N-1)^{2}}$. 	
	Combining (\ref{hol})-(\ref{cheb}) we obtain 
	\begin{align*}
	\int &\left( \psi _{j+1}u_{j+1}\right) ^{2}d\mu_{B}  \\
	&\leq C||\left( \psi
	_{j+1}u_{j+1}\right) ^{2}||_{L^{\Phi }_0\left( B_{j};\mu_{B}  \right) }\cdot
	||1||_{L^{\tilde{\Phi}_0}\left( \{\psi _{j+1}u_{j+1}>0\};\mu_{B}  \right) } \\
	&\leq C\left[\frac{\varphi (r)}{r}\right]^2\kappa_{j+2}^{2}\int_{B_{j}}u_{j}^{2}d\mu_{B}  \cdot \Gamma \left(
	\left\vert \left\{ \psi _{j+1}u_{j}>\frac{\tau\psi _{j+1} \left\Vert \phi \right\Vert _{X}(N-1)}{C\kappa_{j+2}}\right\} \right\vert _{\mu_{B}  }\right)  \\
	&\leq C\left[\frac{\varphi (r)}{r}\right]^2\kappa_{j+2}^{2}\left( \int_{B_{j}}u_{j}^{2}d\mu_{B}  \right) \Gamma \left( \gamma \kappa_{j+2}^{2}\int \left( \psi _{j}u_{j}\right) ^{2}d\mu_{B} 
	\right),
	\end{align*}%
	or in terms of the quantities $U_{j}$,%
	\begin{equation}
	U_{j+1} \leq C\left[\frac{\varphi (r)}{r}\right]^2\kappa_{j+2}^{2}U_{j}\Gamma \left( \gamma \kappa_{j+2}^{2}U_{j}\right)   \label{iter} 
	=C\left[\frac{\varphi (r)}{r}\right]^2\kappa_{j+2}^{2}U_{j}%
	\frac{1}{\tilde{\Phi}_{0}^{-1}\left( \frac{1}{\gamma\kappa_{j+2}^{2}U_{j}}\right) }. 
	\end{equation}	
	Now we use the estimate (\ref{use}) on $\frac{1}{\tilde{\Phi}_0^{-1}\left( 
		\frac{1}{x}\right) }$ to determine the values of $N$ for
	which DeGiorgi iteration provides local boundedness of weak subsolutions,
	i.e. for which $U_{j}\rightarrow 0$ as $j\rightarrow \infty $ provided $U_{0}
	$ is sufficiently small. From (\ref{use}) and (\ref{iter}) we have 
	\begin{equation*}
	U_{j+1}\leq C\left[\frac{\varphi (r)}{r}\right]^2\frac{\kappa_{j+2}^{2}U_{j}}{\left( \ln\frac{1}{\gamma\kappa_{j+2}^{2}U_{j}}\right) ^{2}\dots \left( \ln^{(k-1)}\frac{1}{\gamma\kappa_{j+2}^{2}U_{j}}\right) ^{2}\left( \ln^{(k)}\frac{1}{\gamma\kappa_{j+2}^{2}U_{j}}\right) ^{2N}},
	\end{equation*}%	
	provided
	\begin{equation*}
	\gamma\kappa_{j+2}^{2}U_{j}\leq\left[\exp^{(k)}(2N)\left(\exp^{(k-1)}(2N)\right)^2\dots \left(\exp(2N)\right)^2(2N)^2\right]^{-1},
	\end{equation*}%
	and using the notation 
	\begin{equation*}
	b_{j}\equiv \ln \frac{1}{U_{j}},
	\end{equation*}%
	we can rewrite this as 
	\begin{equation}\label{bj-ind}
	\begin{split}
	b_{j+1} \geq& b_{j}-2\ln\kappa_{j+2}  -2\ln \left( C\frac{\varphi (r)}{r}\right)\\
	&+2\ln \left( b_{j}-2\ln\kappa_{j+2} -\ln \gamma\right)+\dots+ 2\ln^{k-1} \left( b_{j}-2\ln\kappa_{j+2} -\ln \gamma\right)+2N\ln^{(k)} \left( b_{j}-2\ln\kappa_{j+2} -\ln \gamma\right),
	\end{split}
	\end{equation}%
	for $j\geq 0$, provided
	\begin{equation*}
	\frac{1}{\gamma \kappa_{j+2}^{2}}\frac{1}{U_{j}}%
	\geq \exp^{(k)}(2N)\left(\exp^{(k-1)}(2N)\right)^2\dots \left(\exp(2N)\right)^2(2N)^2\equiv M_{k,N};
	\end{equation*}%
	or, equivalently,
	\begin{equation}
	b_{j}>2\ln\kappa_{j+2} +\ln \gamma +\ln M_{k,N},\ \
	\ \ \ j\geq 0.  \label{prov}
	\end{equation}%
	%\ln M_{k,N}=\exp^{(k-1)}(2N)+2\exp^{(k-2)}(2N)+\dots+4N+2\ln(2N)
	We now use induction to show that\\
	\textbf{Claim}: Both (\ref{prov}) and 
	\begin{equation}
	b_{j}\geq b_{0}+2j,\ \ \ \ \ j\geq 0,  \label{bk}
	\end{equation}
	hold for $b_{0}$ taken sufficiently large depending on $N>1$, $k\geq 1$ and $	0<r<R $.
	
	First note that both (\ref{prov}) and (\ref{bk}) are trivial if $j=0$ and $b_{0}$ is
	large enough. Moreover, from the definition of $\kappa_j$, (\ref{def:kappa}), both claims are true for $j\leq E$ provided $b_{0}$ is sufficiently large depending on $N>1$, $k\geq 1$ and $	0<r<R $.
	Assume now that the claim is true for some $j\geq E$. Using the definition of $\kappa_j$, (\ref{def:kappa}), we therefore have for sufficiently large $b_0$ depending on $N$, $k$, $\tau$, and $\Vert\phi\Vert_{X}$
	\begin{equation*}
	\begin{split}
	b_{j+1} &\geq b_{0}+2j+2-2\ln\kappa_{j+2} - 2  -2\ln \left( C\frac{\varphi (r)}{r}\right)\\
	&\quad+2\ln \left(j+ (b_{0}+j-2\ln\kappa_{j+2} -\ln \gamma)\right)+\dots+  2\ln^{(k-1)} \left(j+ (b_{0}+j-2\ln\kappa_{j+2} -\ln \gamma)\right)\\
	&\quad+2N\ln^{(k)}\left(j+ (b_{0}+j-2\ln\kappa_{j+2} -\ln \gamma)\right)\\
	&\geq b_{0}+2(j+2)+2\ln(j+2)+\dots+2\ln^{(k-1)}(j+2)+2N\ln^{(k)}(j+b_0-\ln\gamma-C_{k,N})\\
	&\quad-2\ln(j+2)-\dots-2\ln^{(k-1)}(j+2)-(N+1)\ln^{(k)}(j+2)-2  -2\ln \left( C\frac{\varphi (r)}{r}\right)\\
	&\geq  b_{0}+2(j+1)+(N-1)\ln^{(k)}(j+b_0-\ln\gamma-C_{k,N})-2  -2\ln \left( C\frac{\varphi (r)}{r}\right),
	\end{split}
	\end{equation*}
	where $C_{k,N}>0$ is a constant depending only on $k$ and $N$.
	We have 
	\begin{equation*}
	(N-1)\ln^{(k)}(j+b_0-\ln\gamma-C_{k,N})-2  -2\ln \left( C\frac{\varphi (r)}{r}\right) \rightarrow \infty \quad\text{as}\  j\to\infty,
	\end{equation*}%
	and therefore for $b_{0}$ sufficiently large depending on $N$, $k$, $\gamma$, and $r$, we
	obtain 
	\begin{equation*}
	(N-1)\ln^{(k)}(j+b_0-\ln\gamma-C_{k,N})-2  -2\ln \left( C\frac{\varphi (r)}{r}\right) \geq 0,
	\end{equation*}%
	for all $j\geq E$, which gives (\ref{bk}) for $j+1$,%
	\begin{equation*}
	b_{j+1}\geq b_{0}+2(j+1),
	\end{equation*}%
	and also (\ref{prov}) for $j+1$, 
	\begin{eqnarray*}
		b_{j+1} \geq b_{0}+2j+2\geq 2\ln\kappa_{j+2} +\ln \gamma +\ln M_{k,N},
	\end{eqnarray*}
	where $\ln M_{k,N}=\exp^{(k-1)}(2N)+2\exp^{(k-2)}(2N)+\dots+4N+2\ln(2N)$.
	
	We note that it is sufficient to require for a constant $C_{k,N}$ large enough
	\[
	b_0\geq \ln \gamma+C_{k,N},\quad \text{and}\quad  (N-1)\ln^{(k)}(b_0+E-\ln\gamma-C_{k,N})\geq 2  +2\ln \left( C\frac{\varphi (r)}{r}\right)
	\]
	or
	\begin{equation}	\label{b_0_cond}
	\begin{split}
	b_{0}&\geq \exp^{(k-1)}\left(A_{k,N}\left( \frac{\varphi (r)}{r}\right) ^{\frac{2}{N-1}}\right)+\ln (e^{C_{k,N}}\gamma)\\
	&=\exp^{(k-1)}\left(A_{k,N}\left( \frac{\varphi (r)}{r}\right) ^{\frac{2}{N-1}}\right)+
	\ln \frac{C^2}{\tau ^{2}\left\Vert \phi \right\Vert _{X}^{2}(N-1)^{2}}
	\end{split}
	\end{equation}%
	for $C$ and $A_{k,N}$ sufficiently large depending on $k$ and $N$. This completes the proof of
	the induction step and therefore $b_{j}\rightarrow \infty $ as $j\rightarrow
	\infty $, or $U_{j}\rightarrow 0$ as $j\rightarrow \infty $, provided $%
	U_{0}=e^{-b_{0}}$ is sufficiently small.
	Altogether, we have shown that 
	\begin{equation*}
	u_{\infty }=(u-\tau \left\Vert \phi \right\Vert _{X})_{+}=0\quad \text{on}\
	B_{\infty }=B\left( 0,\frac{r}{2}\right) =\frac{1}{2}B\left( 0,r\right) ,
	\end{equation*}%
	and thus that 
	\begin{equation}
	u\leq \tau \left\Vert \phi \right\Vert _{X}\quad \text{on}\ B_{\infty }\ ,
	\label{bddness_temp}
	\end{equation}%
	provided $U_{0}=\int_{B}|u_{0}|^{2}d\mu_{B} =\frac{1}{w(B) }%
	\int_{B}u_{+}^{2}d\mu$ is sufficiently small. From (\ref{b_0_cond}) it follows
	that it is sufficient to require 
	\begin{align}\label{def delta}
\sqrt{\frac{1}{w(B)}\int_{B}|u_{+}|^{2}d\mu}=\sqrt{U_0}=e^{-b_{0}/2}&\leq
	C\tau \left\Vert \phi \right\Vert _{X}(N-1)\exp\left[-\frac{1}{2}\exp^{(k-1)} \left(
	A_{k,N}\left( \frac{\varphi (r)}{r}\right) ^{\frac{2}{N-1}}\right)\right] \\
	&=\eta
	_{k,N}(r)\tau \left\Vert \phi \right\Vert _{X}\notag\ ,  
	\end{align}%
	where $C$ and $A_{k,N}$ are the constant in (\ref{b_0_cond}), and%
	\begin{equation*}
	\eta _{k,N}(r)\equiv C(N-1)\exp\left[-\frac{1}{2}\exp^{(k-1)} \left(
	A_{k,N}\left( \frac{\varphi (r)}{r}\right) ^{\frac{2}{N-1}}\right)\right] .
	\end{equation*}
	
	To recover the general case we now consider two cases, $\sqrt{U_0}\leq \eta _{k,N}(r)\left\Vert
	\phi \right\Vert _{X}$ and $\sqrt{U_0}>\eta _{N}(r)\left\Vert \phi \right\Vert _{X}$. In the
	first case we obtain that (\ref{def delta}) holds with $\tau =1$ and thus 
	\begin{equation*}
	||u_{+}||_{L^{\infty }(\frac{1}{2}B)}\leq \left\Vert \phi \right\Vert _{X}\ .
	\end{equation*}%
	In the second case, when $\sqrt{\frac{1}{w(B) }%
		\int_{B}u_{+}^{2}}d\mu>\eta _{k,N}(r)\left\Vert \phi \right\Vert _{X}$, we let $%
	\tau =\frac{\sqrt{\frac{1}{w(B)}\int_{B}u_{+}^{2}d\mu}}{%
		\eta _{kN}(r)\left\Vert \phi \right\Vert _{X}}>1$ so that (\ref{def delta})
	holds, and then from (\ref{bddness_temp}) we get 
	\begin{equation*}
	||u_{+}||_{L^{\infty }(\frac{1}{2}B)}\leq \frac{\sqrt{\frac{1}{w(B) }\int_{B}u_{+}^{2}d\mu}}{\eta _{k,N}(r)}.
	\end{equation*}%
	Altogether this gives 
	\begin{equation*}
	||u_{+}||_{L^{\infty }(\frac{1}{2}B)}\leq \left\Vert \phi \right\Vert
	_{X}+A_{k,N}(r)\sqrt{\frac{1}{w(B)}\int_{B}u_{+}^{2}d\mu},
	\end{equation*}%
	where 
	\begin{equation*}
	A_{k,N}\left( r\right) =\frac{1}{\eta _{k,N}\left( r\right) }=C_{1}\left[\exp^{(k)} \left(
	C_{2}\left( \frac{\varphi \left( r\right) }{r}\right) ^{\frac{2}{N-1}}\right)\right]^{\frac{1}{2}} ,
	\end{equation*}%
	and where the constants $C_{1}$ and $C_{2}$ depend on $N$ and $k$, but not
	on $r$.
\end{proof}
Just as in \cite{KRSSh} we can obtain the following result
\begin{corollary}[of the proof.]
	Let all the conditions of Proposition \ref{DG} be satisfied. Then
	\begin{equation}
	\left\Vert u_{+}\right\Vert _{L_{w}^{\infty }(\frac{1}{2}B)}\leq A_{k,N}(3r)\left( 
	\frac{1}{w(3B) }\int_{B}u_{+}^{2}w\right) ^{\frac{1}{2}%
	}+\left\Vert \phi\right\Vert _{X},  \label{Inner ball 3}
	\end{equation}%
	with $A_{k,N}(r)$ defined in (\ref{def AN}).\label{cor:inner}
\end{corollary}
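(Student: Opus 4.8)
The statement is a corollary of the \emph{proof} of Theorem~\ref{DG} rather than of its statement: invoking Theorem~\ref{DG} directly on the ball $3B=B(0,3r)$ would only yield the weaker inequality with $\int_{3B}u_{+}^{2}w$ on the right. The plan is instead to re-run the De Giorgi iteration from the proof of Theorem~\ref{DG} essentially verbatim --- same adapted cutoff sequence $\{\psi_j\}$ at $(0,r)$, same truncations $u_j=(u-C_j)_{+}$, same Caccioppoli inequality (Proposition~\ref{prop:cacc}) --- but with the normalized measure $\mu_B$ replaced throughout by $\mu_{3B}$, where $d\mu_{3B}(x)=w(x)\,dx/w(3B)$. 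The only step where this matters is the Orlicz--Sobolev estimate~(\ref{os_applied}): since $\supp(\psi_j u_j)\subset B(0,r_j)\subset B\subset 3B$, the function $v=\psi_{j+1}u_{j+1}$ lies in $Lip_c(3B)$, so inequality~(\ref{orl-sob-w}) may be applied \emph{on the larger ball} $3B$, giving $\|v\|_{L^{\Phi}_{\mu_{3B}}}\le C\,\varphi(3r)\,w(3B)^{-1/2}\|\nabla_A v\|_{L^{2}}$; thus the factor $\varphi(r)/w(B)^{1/2}$ becomes $\varphi(3r)/w(3B)^{1/2}$ and nothing else in that step changes. The Hölder inequality for Young functions~(\ref{hol}) and Chebyshev's inequality~(\ref{cheb}) are insensitive to the normalizing constant of the measure, so they transcribe unchanged.

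With $U_j\equiv\int_{B_j}|u_j|^{2}\,d\mu_{3B}$, the chain~(\ref{os_applied})--(\ref{iter}) then reproduces
\[
U_{j+1}\le C\Big[\tfrac{\varphi(3r)}{r}\Big]^{2}\kappa_{j+2}^{2}\,U_j\,\Gamma\big(\gamma\kappa_{j+2}^{2}U_j\big),
\]
which is precisely~(\ref{iter}) with $\varphi(r)$ replaced by $\varphi(3r)$. Writing $\varphi(3r)/r=3\cdot\varphi(3r)/(3r)$ and absorbing the fixed constant $3^{2/(N-1)}$ into $C_1,C_2$ in~(\ref{def AN}) (legitimate since $N$ is fixed), the induction on $b_j=\ln(1/U_j)$ and the threshold condition~(\ref{b_0_cond}) carry over with $A_{k,N}$ evaluated at $3r$. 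Hence $U_j\to 0$, i.e.\ $u_{+}\le\tau\|\phi\|_X$ on $B_\infty=\frac{1}{2}B$, as soon as $U_0\le\eta_{k,N}(3r)^{2}\tau^{2}\|\phi\|_X^{2}$ with $\eta_{k,N}(3r)=A_{k,N}(3r)^{-1}$.

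It remains to read off the right form of $U_0$: since $C_0\ge0$ we have $u_0=(u-C_0)_{+}\le u_{+}$ pointwise, and since the \emph{first} cutoff ball is still $B_1=B$ (the original cutoffs having been retained), $U_0=\int_B|u_0|^{2}\,d\mu_{3B}\le\frac{1}{w(3B)}\int_B u_{+}^{2}w$ --- an integral over $B$ only, which is exactly the gain over applying Theorem~\ref{DG} on $3B$. So the smallness hypothesis is met whenever $\frac{1}{w(3B)}\int_B u_{+}^{2}w\le\eta_{k,N}(3r)^{2}\tau^{2}\|\phi\|_X^{2}$, and running the dichotomy in $\tau$ from the end of the proof of Theorem~\ref{DG} (the cases $\sqrt{U_0}\le\eta_{k,N}(3r)\|\phi\|_X$ and $\sqrt{U_0}>\eta_{k,N}(3r)\|\phi\|_X$) produces~(\ref{Inner ball 3}).

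I do not expect a genuine obstacle here: the proof of Theorem~\ref{DG} never used any relation between the normalizing measure $\mu_B$ and the radius of the ball on which the Orlicz--Sobolev inequality is invoked, so one is free to invoke it on any ball containing the supports of all the test functions, paying only $\varphi(r)\mapsto\varphi(3r)$ and $w(B)\mapsto w(3B)$. The single bookkeeping point to check is that the adapted cutoff bound $\big\||\nabla_A\psi_j|/w^{1/2}\big\|_{L^{\infty}}\lesssim\kappa_{k,N}(j)/r$ and the Caccioppoli inequality remain untouched, which is immediate since both are local to $B$ and $Lip_c(B)\subset Lip_c(3B)$; one also assumes, as usual, that $3B\subset\Omega$ and that $3r$ lies in the domain of $\varphi$, so that the statement is meaningful.
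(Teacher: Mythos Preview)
Your proposal is correct and is precisely the approach the paper has in mind: the paper does not spell out a proof here but simply says ``Just as in \cite{KRSSh} we can obtain the following result,'' and the argument you give---retaining the adapted cutoffs at $(0,r)$ while applying the Orlicz--Sobolev inequality on the larger ball $3B$ and normalizing by $w(3B)$---is exactly the modification used in that reference. Your bookkeeping on the constants (absorbing the factor $3^{2/(N-1)}$ into $C_2$) and on $U_0$ is accurate.
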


\subsection{Continuity}

We start with an auxiliary lemma.

\begin{lemma}\cite[Lemma 31]{KRSSh}
	\label{DeG Lemma}Suppose that the proportional vanishing Poincar\'{e}
	inequality (\ref{1-1poinc}) holds. Let $v\in W^{1,2}_{A}(\Omega)$, and define $\overline{v}\left( y\right) \equiv \max \left\{ 0,\min \left\{
	1,v\left( y\right) \right\} \right\} $. Set 
	\begin{eqnarray*}
		\mathcal{A} &\equiv &\left\{ y\in B\left( x,r\right) :\overline{v}\left( y\right) =
		0\right\} , \\
		\mathcal{C} &\equiv &\left\{ y\in B\left( x,r\right) :\overline{v}\left( y\right) =
		1\right\} , \\
		\mathcal{D} &\equiv &\left\{ y\in B\left( x,2r\right) :0<\overline{v}\left( y\right)
		<1\right\} .
	\end{eqnarray*}%
	Fix $x$ and $r$ and suppose that $\overline{v}$
	satisfies $\int_{B\left( x,3r/2\right) }\left\vert \nabla_{A} \overline{v}\left(
	y\right) \right\vert ^{2}dy\leq \frac{C_{0}}{r^2}w(\mathcal{D})$.
	Then if $w( \mathcal{A}) \geq \frac{1}{2}w(
	B\left( x,r\right)) $, we have 
	\begin{equation}
	C_{0}w(\mathcal{D}) \geq C_{1}\frac{w(\mathcal{C})^{2}}{w(B(x,r))} ,  \label{isoperimetric}
	\end{equation}
	where $C_1>0$ depends only on the constant in (\ref{1-1poinc}).
\end{lemma}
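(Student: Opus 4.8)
The plan is to set up a DeGiorgi-type isoperimetric argument by comparing $\overline{v}$ with a suitable affine rescaling of it that vanishes on a large set and then applying the proportional vanishing Poincar\'{e} inequality (\ref{1-1poinc}). Concretely, since $w(\mathcal{A}) \geq \frac{1}{2} w(B(x,r))$ and $\overline{v} \equiv 0$ on $\mathcal{A}$, the function $\overline{v}$ itself already vanishes on a set of relative $w$-measure at least $\frac12$ inside $B(x,r)$, so (\ref{1-1poinc}) applies directly to $v = \overline{v}$:
\begin{equation*}
\int_{B(x,r)} \overline{v}^{2}\, d\mu \leq C r^2 \int_{\frac{3}{2}B} |\nabla_A \overline{v}|^2\, dx.
\end{equation*}
First I would bound the left-hand side from below by restricting the integral to $\mathcal{C}$, where $\overline{v} = 1$, giving $\int_{B(x,r)} \overline{v}^2\, d\mu \geq w(\mathcal{C})$. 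Next I would bound the right-hand side from above using the hypothesis $\int_{B(x,3r/2)} |\nabla_A \overline{v}|^2\, dy \leq \frac{C_0}{r^2} w(\mathcal{D})$. Chaining these two inequalities yields
\begin{equation*}
w(\mathcal{C}) \leq C r^2 \cdot \frac{C_0}{r^2} w(\mathcal{D}) = C C_0\, w(\mathcal{D}),
\end{equation*}
which is the desired conclusion except that the claimed inequality (\ref{isoperimetric}) has $w(\mathcal{C})^2 / w(B(x,r))$ on the right rather than $w(\mathcal{C})$.

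To recover the quadratic form, the right normalization is to observe that $w(\mathcal{C}) \leq w(B(x,r))$, so $w(\mathcal{C}) \geq w(\mathcal{C})^2 / w(B(x,r))$; hence the bound $w(\mathcal{C}) \leq C C_0 w(\mathcal{D})$ is in fact \emph{weaker} than (\ref{isoperimetric}), which means I have the inequality backwards and need to be more careful. The correct route is the classical one: one should \emph{not} apply Poincar\'{e} to $\overline{v}$ directly but rather exploit that on $\mathcal{C}$ the value $1$ is separated from the value $0$ on $\mathcal{A}$ by the "collar" $\mathcal{D}$, and use an $L^1$-type or truncated Poincar\'{e}/Sobolev estimate to get a term proportional to $w(\mathcal{C})$ (not $w(\mathcal{C})^{1/2}$) on one side. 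In practice, since $\overline{v}$ takes values in $[0,1]$, we have $\overline{v}^2 \leq \overline{v}$, but also $\overline{v} \geq \overline{v}^2$, and on $\mathcal{C}$ both equal $1$; the key point is to apply (\ref{1-1poinc}) to $\overline{v}$ and use $\int \overline{v}^2 d\mu \geq w(\mathcal{C})$ together with the gradient hypothesis, then divide through. Re-examining: from $w(\mathcal{C}) \leq C C_0 w(\mathcal{D})$ and $w(\mathcal{C}) \geq w(\mathcal{C})^2/w(B(x,r))$ we indeed get $w(\mathcal{C})^2/w(B(x,r)) \leq C C_0 w(\mathcal{D})$, i.e. precisely $C_0 w(\mathcal{D}) \geq C_1 w(\mathcal{C})^2/w(B(x,r))$ with $C_1 = 1/C$. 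So the straightforward argument does work, and the weaker-looking $w(\mathcal{C})$ bound automatically upgrades to the stated quadratic form.

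So the steps, in order, are: (1) verify that $\overline{v} \in Lip(\frac32 B)$ (or approximate; $\overline{v}$ is a truncation of $v \in W^{1,2}_A$, and the Poincar\'{e} inequality extends to it by density) and that it vanishes on $\mathcal{A}$ with $w(\mathcal{A}) \geq \frac12 w(B(x,r))$, so (\ref{1-1poinc}) applies; (2) lower-bound $\int_{B(x,r)} |\overline{v}|^2 d\mu \geq \int_{\mathcal{C}} 1\, d\mu = w(\mathcal{C})$; (3) upper-bound the gradient integral via the hypothesis, noting $\nabla_A \overline{v} = 0$ a.e. on the sets where $\overline{v} \in \{0,1\}$ so the $\frac32 B$ integral is really over the collar and is controlled by $\frac{C_0}{r^2}w(\mathcal{D})$; (4) combine to get $w(\mathcal{C}) \leq C C_0 w(\mathcal{D})$; (5) use $w(\mathcal{C}) \leq w(B(x,r))$ to replace $w(\mathcal{C})$ by $w(\mathcal{C})^2/w(B(x,r))$ on the left, obtaining (\ref{isoperimetric}) with $C_1$ depending only on the Poincar\'{e} constant. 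The main obstacle is the technical justification in step (1)—that the proportional vanishing Poincar\'{e} inequality, stated for $v \in Lip(\frac32 B)$, can be applied to the truncation $\overline{v}$ of a general $W^{1,2}_A$ function—which requires an approximation argument (truncations of Lipschitz functions approximating $v$ are themselves Lipschitz and vanish on sets converging appropriately) together with care that the "vanishes on a set of measure $\geq \frac12 w(B)$" condition is stable under this approximation; since this is cited as \cite[Lemma 31]{KRSSh}, the expectation is that this has already been handled there and only the rescaling bookkeeping needs to be reproduced.
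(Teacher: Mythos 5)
Your proof is correct and uses the same central ingredient as the paper, namely applying the proportional vanishing Poincar\'{e} inequality (\ref{1-1poinc}) to $\overline{v}$, justified by $w(\mathcal{A}) \geq \tfrac12 w(B)$. The route is slightly different and actually cleaner: you lower-bound $\int_B \overline{v}^2\,d\mu \geq w(\mathcal{C})$ directly (since $\overline{v}^2 \equiv 1$ on $\mathcal{C}$), then combine with (\ref{1-1poinc}) and the gradient hypothesis to obtain the \emph{linear} estimate $w(\mathcal{C}) \leq C\,C_0\,w(\mathcal{D})$, from which the stated quadratic form follows trivially via $w(\mathcal{C}) \leq w(B(x,r))$. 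The paper instead starts from $w(\mathcal{C}) \leq \int_B \overline{v}\,d\mu$, inserts a Cauchy--Schwarz step to reach $\bigl(\int_B \overline{v}^2\,d\mu\bigr)^{1/2} w(B)^{1/2}$, applies (\ref{1-1poinc}) to the first factor, and squares; this lands directly on $w(\mathcal{C})^2 \leq C\,C_0\,w(\mathcal{D})\,w(B)$, i.e.\ exactly (\ref{isoperimetric}). Your approach shows that the quadratic form in (\ref{isoperimetric}) --- carried over from the $L^1$-Poincar\'{e} setting of the original \cite[Lemma 31]{KRSSh} --- is actually weaker than what the $L^2$ hypothesis delivers; the Cauchy--Schwarz step in the paper is essentially redundant here and only serves to present the conclusion in the form needed downstream. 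Your step (1) concern, about extending (\ref{1-1poinc}) from Lipschitz test functions to truncations of $W^{1,2}_A$ functions by density, is a real technical point that the paper's proof glosses over but which does need the approximation argument you sketch.
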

\begin{proof}
	Using H\"{o}lder inequality and (\ref{1-1poinc}), we have
	\begin{align}\label{vw_est}
	w(\mathcal{C})&=\int_{\mathcal{C}}\overline{v}(x)w(x)dx\leq \int_{B}\overline{v}(x)w(x)dx\leq \left(\int_{B}\overline{v}^2(x)w(x)dx\right)^{\frac{1}{2}}w(B)^{\frac{1}{2}}\notag\\
	&\leq Cr \left(\int_{\frac{3}{2}B}|\nabla_{A}\overline{v}(x)|^{2}dx\right)^{\frac{1}{2}}w(B)^{\frac{1}{2}}\leq C\sqrt{C_0}w(\mathcal{D})^{\frac{1}{2}}w(B)^{\frac{1}{2}},
	\end{align}	
	where we used the condition
	\[
	\int_{\frac{3}{2}B}|\nabla_{A}\overline{v}(x)|^{2}dx\leq \frac{C_{0}}{r^2}w(\mathcal{D}).
	\]
	Squaring both sides and rearranging gives (\ref{isoperimetric}).
\end{proof}

To prove our main continuity result, Theorem \ref{thm:continuity}, we will use the following weighted analog of Proposition 32 from \cite{KRSSh}. The proof follows closely that of \cite[Proposition 32]{KRSSh}, but modifications need to be made to account for the $L^2$ nature of the assumptions.
\begin{prop}\cite[Proposition 32]{KRSSh}
	\label{Holder thm}Suppose that $\Omega \subset \mathbb{R}^{n}$ is a domain
	in $\mathbb{R}^{n}$ with $n\geq 2$. Suppose the proportional vanishing
	Poincar\'{e} inequality (\ref{1-1poinc}) holds. For every ball $%
	B_{r_{0}}=B\left( x,r_{0}\right) $ contained in $\Omega $, assume that the
	local boundedness inequality,%
	\begin{equation}
	\left\Vert u_{+}\right\Vert _{L^{\infty }\left( B_{\frac{r}{2}}\right) }\leq 
	\frac{1}{2\sqrt{\delta \left( r\right) }}\sqrt{\frac{1}{w(B_{3r})}\int_{B_{r}}u_{+}^{2}w}\ ,  \label{LB}
	\end{equation}%
	holds for $Lu=0$ in $B_{3r}$ with $\delta \left( r\right) $%
	\thinspace $>0$, for all $0<r<r_{0}$. Let $r_{j}=\frac{r_{0}}{4^{j}}$, and
	set $B_{j}\equiv B_{r_{j}}=B\left( x,r_{j}\right) $. Suppose furthermore
	that the following summability condition holds,%
	\begin{equation}\label{summability}
	\sum_{j=1}^{\infty }\lambda _{j}=\infty ,\quad\text{where}\  \lambda _{j}\equiv \frac{1}{2^{3+\frac{C_{3}}{\delta ^{2}(r_{j})}}},
	\end{equation}%
	and $B_{j}=B(0,r_{j})=B\left( 0,\frac{r_{0}}{4^{j}}\right) $, and where $%
	C_{3}$ is a positive constant. Then if $u$ is a weak solution to $L
	u=0$ in $B_{r_{0}}$, we have $u\in C\left( B_{r_{0}}\right) $.
\end{prop}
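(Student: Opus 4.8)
The plan is to run an oscillation-decay argument on the dyadic balls $B_j = B(x, r_j)$ with $r_j = r_0/4^j$. Set $M_j = \sup_{B_j} u$, $m_j = \inf_{B_j} u$, and $\omega_j = M_j - m_j = \operatorname{osc}_{B_j} u$; these are well defined and finite by the local boundedness hypothesis \eqref{LB} (applied to $u$ and $-u$, which are both solutions). The goal is to show $\omega_{j+1} \leq (1-\lambda_j)\omega_j$ for a suitable sequence $\lambda_j \in (0,1)$ matching \eqref{summability}, since then $\omega_j \to 0$: indeed $\prod_j (1-\lambda_j) \le \exp(-\sum_j \lambda_j) = 0$ precisely because $\sum \lambda_j = \infty$. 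Once $\operatorname{osc}_{B_j} u \to 0$ at every point $x$ (with the radii and constants depending only on the structural data, uniformly on compact subsets), continuity of $u$ at $x$ follows, and since $x \in \Omega$ is arbitrary, $u \in C(\Omega)$, which after localizing to $B_{r_0}$ gives the statement.

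The heart of the matter is the single-step decay $\omega_{j+1} \leq (1-\lambda_j)\omega_j$. First I would normalize: consider $v = \frac{2}{\omega_j}\bigl(u - \tfrac{M_j+m_j}{2}\bigr)$ on $B_j$, so $|v| \le 1$ on $B_j$ and $v$ solves $Lv = 0$. Of the two sets $\{v \le 0\}$ and $\{v \ge 0\}$ inside $B_{j+1/2} := B(x, r_j/2)$ (say), at least one — call it the set where $v$ has a favorable sign, after possibly replacing $v$ by $-v$ — has $w$-measure at least half of $w(B_{j+1/2})$. Working with $\bar v = \max\{0, \min\{1, 1-v\}\}$ or a similar truncation/rescaling tied to a threshold level, I would apply the density/isoperimetric Lemma~\ref{DeG Lemma}: its hypothesis is a Caccioppoli-type bound $\int_{\frac32 B}|\nabla_A \bar v|^2 \le \frac{C_0}{r^2} w(\mathcal D)$, which comes from Proposition~\ref{prop:cacc} with $\phi = 0$ (so $P=0$) applied to the truncation of $v$, and the conclusion $C_0 w(\mathcal D) \ge C_1 w(\mathcal C)^2 / w(B)$ forces the "intermediate level" set $\mathcal D$ to be nontrivial whenever the "good level" set $\mathcal C$ is nontrivial. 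Iterating this over a finite chain of truncation levels $2^{-1}, 2^{-2}, \dots, 2^{-m}$ (a De Giorgi "slicing" between levels) shows that after $m \sim C_3/\delta^2(r_j)$ such levels, the measure of the set where $v$ is still close to its extreme value on the good side must have shrunk below a critical threshold $\delta(r_{j+1})$-worth of mass. At that stage the local boundedness inequality \eqref{LB}, applied on the ball $B_{j+1}$ to the nonnegative subsolution $(v - (1 - 2^{-m}))_+$ (or its sign-reflected analogue), upgrades the smallness of this measure into a pointwise bound: $v \le 1 - 2^{-m}$ on $B_{j+1}$, hence the oscillation of $v$ — and therefore of $u$ after undoing the normalization — drops by the factor $1 - 2^{-m-1} =: 1-\lambda_j$ with $\lambda_j = 2^{-(3 + C_3/\delta^2(r_j))}$ as in \eqref{summability}.

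The main obstacle, and where the $L^2$ nature of the hypotheses (as the paper flags) forces genuine modifications relative to \cite[Proposition 32]{KRSSh}, is carrying out this measure-shrinking "in between the levels" argument quantitatively: each application of Lemma~\ref{DeG Lemma} requires verifying the Caccioppoli hypothesis $\int_{\frac32 B_{j+1}}|\nabla_A \bar v_\ell|^2 \le \frac{C_0}{r_{j+1}^2} w(\mathcal D_\ell)$ for the truncation $\bar v_\ell$ at level $\ell$, where $\mathcal D_\ell$ is the annular level-set $\{2^{-\ell-1} < \text{good part of } v < 2^{-\ell}\}$-type region. This needs Proposition~\ref{prop:cacc} together with the fact that the truncations telescope, and the constant $C_0$ must be tracked to be independent of $\ell$ and $j$ so that the number of levels $m$ needed is controlled by $C_3/\delta^2(r_j)$ with $C_3$ absolute; the $L^2$ Poincaré (rather than $L^1$) means one loses factors that must be reabsorbed carefully, and one must make sure the threshold at which \eqref{LB} kicks in is comparable to $\delta(r_{j+1})^2$ rather than $\delta(r_{j+1})$. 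I would also need to confirm that \eqref{LB} can legitimately be applied to the truncated function $(v - c)_+$ — it is a nonnegative weak subsolution of $Lu = 0$ (truncations of subsolutions are subsolutions, by the usual Caccioppoli/convexity argument), so the hypothesis applies — and that the nesting $B_{j+1} \subset B_{r_{j+1}/2 \cdot \text{something}} \subset \frac32 B_{j+1} \subset B_j$ is consistent with the choice $r_j = r_0/4^j$ (the factor $4$ rather than $2$ being exactly what leaves room for the $B_r \subset B_{3r} \subset B_j$ inclusions needed in \eqref{LB} and in Lemma~\ref{DeG Lemma}). Finally, the divergence of $\sum \lambda_j$ is used verbatim to conclude $\omega_j \to 0$, and a standard interpolation between dyadic scales converts this into an explicit (Dini-type) modulus of continuity, giving $u \in C(B_{r_0})$.
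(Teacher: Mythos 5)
Your proposal follows essentially the same route as the paper: normalize $u$ to $v$ with $|v|\le 1$, take WLOG the half with favorable sign having at least half the $w$-mass, run the De~Giorgi slicing with truncations $v_k = 2^k(v-(1-2^{-k}))$, use Lemma~\ref{DeG Lemma} (whose Caccioppoli hypothesis is verified via Proposition~\ref{prop:cacc} with $\phi=0$) to force $w(\mathcal D_k)\gtrsim \delta(r)^2 w(B_{3r})$ at each level, conclude the chain terminates after $k_0\lesssim C_3/\delta(r)^2$ steps, then apply \eqref{LB} to $v_{k_0+1}$ to get the pointwise drop $v\le 1-2^{-(k_0+2)}$ and hence $\operatorname{osc}_{B_{r/2}}u \le (1-\lambda(r)/2)\operatorname{osc}_{B_r}u$, and finish via $\prod(1-\lambda_j/2)\to 0$ from $\sum\lambda_j=\infty$. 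One small clarification relative to the worry you raise: the threshold at which \eqref{LB} is invoked is $\delta(r)$, not $\delta(r)^2$; the $\delta^2$ enters only through Lemma~\ref{DeG Lemma}, which bounds $w(\mathcal D_k)$ from below by $w(\mathcal C_k)^2/w(B)\gtrsim \delta(r)^2 w(B_{3r})$ (the square coming from the $L^2$ Poincar\'e), and this is what caps the number of admissible slicing levels at $\sim C_3/\delta(r)^2$.
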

\begin{proof}
	Let $u$ be as in the statement of the theorem, define
	\[
	v(x)\equiv \frac{2}{\mathrm{osc}_{B_r}u}\left\{u(x)-\frac{\sup_{B_r}u+\inf_{B_r}u}{2}\right\}.
	\]
	Then $v$ is a weak solution to $Lv=0$, and $-1\leq v\leq 1$ on $B_{3r}$. Moreover, assume $w\left(\{v\leq 0\}\cap B_{r}\right)\geq \frac{1}{2}w(B_r)$. We will show that $\mathrm{osc}_{B_{\frac{r}{2}}}v\leq 2-\lambda(r)$, where
	\[
	\lambda(r)\equiv \frac{1}{2^{3+\frac{C_{3}}{\delta ^{2}(r)}}}\in (0,1)
	\]
	with a constant $C_3$ depending only on the constant in (\ref{1-1poinc}).
	This will give
	\begin{equation}\label{eq:osc-control}
	\mathrm{osc}_{B_{\frac{r}{2}}}u\leq \left(1-\frac{\lambda(r)}{2}\right) \mathrm{osc}_{B_{r}}u.
	\end{equation}
	If instead we have $w\left(\{v\geq 0\}\cap B_{r}\right)\geq \frac{1}{2}w(B_r)$, the same argument can be applied to $-v$ to obtain (\ref{eq:osc-control}). Define
	\[
	v_k=2^k\left(v-(1-2^{-k})\right),
	\]
	and let $\psi$ be a smooth cutoff function supported in $B_{2r}$, and s.t. $\psi\equiv 1$ on $B_{3r/2}$. 
	We now would like to apply Lemma \ref{DeG Lemma} to $2v_k$. Let $\overline{v}_{k}\left( y\right) \equiv \max \left\{ 0,\min \left\{
	1,2v_k\left( y\right) \right\} \right\}$ and define
	\begin{eqnarray*}
		\mathcal{A}_k &\equiv &\left\{ y\in B\left( x,r\right) :\overline{v}_{k}\left( y\right) =
		0\right\} , \\
		\mathcal{C}_k &\equiv &\left\{ y\in B\left( x,r\right) :\overline{v}_{k}\left( y\right) =
		1\right\} , \\
		\mathcal{D}_k &\equiv &\left\{ y\in B\left( x,2r\right) :0<\overline{v}_{k}\left( y\right)
		<1\right\}.
	\end{eqnarray*}%
	One can verify that $L\overline{v}_{k}=0$, and moreover $\{y\in B\left( x,2r\right)\colon \nabla_{A}\overline{v}_{k}(y)\neq 0\}\subseteq \mathcal{D}$.  We now show a slightly improved version of the Caccioppoli inequality (\ref{Cacc_inhomog}) with $\phi=0$. 
	Since $L\overline{v}_{k}=0$ we have by H\"{o}lder's inequality
	\begin{align*}
	\int_{\mathcal{D}}\psi^{2}|\nabla_{A}\overline{v}_{k}|^{2}&=	\int_{B_{2r}}\psi^{2}|\nabla_{A}\overline{v}_{k}|^{2}=-2\int_{B_{2r}}\psi \overline{v}_{k}\nabla\psi\cdot A\nabla \overline{v}_{k}
	\leq 2\int_{\mathcal{D}}\left\vert\psi \overline{v}_{k}\nabla\psi\cdot A\nabla \overline{v}_{k}\right\vert\\
	&\leq \frac{1}{2}\int_{\mathcal{D}}\psi^{2}|\nabla_{A}\overline{v}_{k}|^{2}+C \int_{\mathcal{D}} \left\vert\nabla_{A}\psi\right\vert^2\overline{v}_{k}^{2},
	\end{align*}
	and thus
	\[
	\int_{B_{2r}}\psi^{2}|\nabla_{A}\overline{v}_{k}|^{2}\leq C \int_{\mathcal{D}} \frac{\left\vert\nabla_{A}\psi\right\vert^2}{w}\overline{v}_{k}^{2}w \leq C\left\Vert\frac{|\nabla_{A}\psi|^{2}}{w}\right\Vert_{L^{\infty}}w(\mathcal{D}),
	\]
	where for the last inequality we used the fact that $\overline{v}_{k}\leq 1$. This gives
	\[
	\int_{B_{2r}}\psi^{2}|\nabla_{A}\overline{v}_{+}|^{2}\leq C\left\Vert\frac{|\nabla_{A}\psi|^{2}}{w}\right\Vert_{L^{\infty}}w(\mathcal{D}).
	\]
	We are now set to apply Lemma \ref{DeG Lemma} to $\overline{v}_{k}$. 
	Since $v\leq 0$ implies $v_k\leq 0$ which is equivalent to $\overline{v}_{k}=0$, we have $w(\mathcal{A}_k)\geq \frac{1}{2}w(B_r)$ by our assumption on $v$. Lemma \ref{DeG Lemma} is now applied to $2v_k$ recursively for $k=0,1,2,\dots$ but only as long as
	\begin{equation}\label{aslong}
	\frac{1}{w(B_{3r})}\int_{B_{r}}(v_k)^{2}_{+}w\geq \delta(r),
	\end{equation}
	where $\delta(r)$ is as in (\ref{LB}). Since $v_{k+1}=2v_k-1$ we have
	\[
	w(\mathcal{C}_k)\geq \int_{\mathcal{C}_k}(v_{k+1})^{2}_{+}w\geq  \delta(r)w(B_{3r}),
	\]
	and therefore by Lemma \ref{DeG Lemma}
	\[
	w\left(\{0<2v_k<1\}\cap B_{2r}\right)=w(\mathcal{D}_k)\geq C\frac{\left(\delta(r)w(B_{3r})\right)^2}{w(B_{r})}\geq C\delta(r)^2w(B_{3r})=:\alpha(r).
	\]
	This gives
	\begin{align*}
	w(B_{2r})&\geq w\left(\{2v_k\leq 0\}\cap B_{2r}\right)= w\left(\{2v_{k-1}\leq 1\}\cap B_{2r}\right)\\
	&=w\left(\{2v_{k-1}\leq 0\}\cap B_{2r}\right)+w\left(\{0<2v_k\leq 1\}\cap B_{2r}\right)\\
	&\geq w\left(\{2v_{k-1}\leq 0\}\cap B_{2r}\right)+\alpha(r)\\
	&\dots\\
	&\geq w\left(\{2v_{0}\leq 0\}\cap B_{2r}\right)+k\alpha(r)\geq k\alpha(r).
	\end{align*}
	Since $\alpha(r)>0$, this inequality must fail for a finite $k$, namely a unique integer $k_0$ satisfying $\frac{w(B_{2r})}{\alpha(r)}<k_0\leq \frac{w(B_{2r})}{\alpha(r)}+1$. Thus condition (\ref{aslong}) must fail for $k=k_0+1$, so
	\[
	\int_{B_{r}}(v_k)^{2}_{+}w< \delta(r)w(B_{3r}).
	\]
	By assumption (\ref{LB}) this gives that in $B_{\frac{r}{2}}$ we have
	\[
	v_{k_0+1}<\frac{1}{2},
	\]
	which gives 
	\[
	v\leq 1-\frac{1}{2^{k_0+2}}\quad\text{in}\  B_{\frac{r}{2}}.
	\]
	Recall that by definition $\alpha(r)=C\delta(r)^2w(B_{3r})$, and thus 
	\[
	k_0\leq \frac{w(B_{2r})}{\alpha(r)}+1\leq \frac{C_3}{\delta(r)^2}+1,
	\]
	so
	\[
	\frac{1}{2^{k_0+2}}\geq  \frac{1}{2^{3+\frac{C_{3}}{\delta ^{2}(r)}}}=\lambda(r),\quad\text{and}\  \sup_{B_{\frac{r}{2}}}v\leq 1-\lambda(r).
	\]
	Using the definition of $v$ we obtain (\ref{eq:osc-control}) for all $r\in (0,r_0)$. Iterating this inequality gives
	\[
	\mathrm{osc}_{B_{\frac{r}{4^l}}}u\leq \left\{\prod_{j=1}^{l}\left(1-\frac{\lambda\left(\frac{r}{4^j}\right)}{2}\right) \right\}\mathrm{osc}_{B_{r}}u,\quad l\geq 1.
	\]
	By summability condition (\ref{summability}) the right hand side converges to zero as $l\to\infty$, which implies continuity of $u$ at $x$.
\end{proof}

We now prove Theorem \ref{thm:continuity}

\begin{proof}
	Using Corollary \ref{cor:inner} it is sufficient to show that the summability condition (\ref{summability}) is satisfied with $\delta(r)=4A_{k,N}(3r)^{-2}$, and $A_{k,N}$ defined in (\ref{def AN}). Proposition \ref{Holder thm} will then imply continuity of weak solutions. We proceed as in the proof of Theorem 84 in \cite{KRSSh}. First, using (\ref{phi-cond}) we have
	\[
	\delta(r)=\frac{1}{4A_{k,N}(3r)^2}=\frac{1}{4C_{1}^{2}}\left[\exp^{(k)} \left(
	C_{2}\left( \frac{\varphi \left( 3r\right) }{3r}\right) ^{\frac{2}{N-1}}\right)\right]^{-1}\geq \left[\exp^{(k)} \left(
	C'\left[\ln^{(k+2)}\left( \frac{1 }{3r}\right) \right]^{\frac{2\alpha N}{N-1}}\right)\right]^{-1}.
	\]
	We now use the fact that $\frac{2\alpha N}{N-1}<1$ to conclude that for every $\ve>0$ there exists $a_0$ sufficiently large, depending on $\ve$, $N$, and $\alpha$, satisfying
	\[
	\left[\ln^{(k+2)}a\right]^{\frac{2\alpha N}{N-1}}\leq \ve \ln^{(k-1)}\left(\ve \ln^{(3)}a\right)\quad \forall a\geq a_0.
	\]
	This gives for $\ve>0$ and $r>0$ sufficiently small
	\[
	\delta^{2}(r)\geq  \left[\exp^{(k)} \left(
	C'\ve\ln^{(k-1)}\left(\ve\ln^{(3)} \frac{1 }{3r}\right)\right)\right]^{-2}\geq \left[\exp\left(\ve\ln^{(3)}\frac{1}{3r}\right)\right]^{-2}=\left[\ln^{(2)}\frac{1}{3r}\right]^{-2\ve}.
	\]
	Now, provided $2\ve<1$ we have for every $\gamma<1$
	\[
	\left[\ln^{(2)}a\right]^{2\ve}\leq \gamma\ln^{(2)}a,
	\]
	for $a$ sufficiently large.
	We therefore conclude for $j$ sufficiently large and $\gamma$ chosen sufficiently small
	\[
	\lambda_j=\frac{1}{2^{3+\frac{C_{3}}{\delta ^{2}(r_{j})}}}\gtrsim \exp\left(-C''\left[\ln^{(2)}\frac{1}{3r_j}\right]^{2\ve}\right)\geq\exp\left(-C''\gamma\ln^{(2)}\frac{1}{3r_j}\right)
	\gtrsim\exp\left(-C'''\gamma\ln j\right)\gtrsim\frac{1}{j}.
	\]
	This gives (\ref{summability}) and by Proposition \ref{Holder thm} we conclude continuity of weak solutions.
\end{proof}

\section{Sufficient conditions}
\subsection{$(2,2)$ Poincar\'{e} implies proportional vanishing Poincar\'{e}}\label{sec:poinc}
In this section we show that the standard weighted $(2,2)$ Poincar\'{e} inequality
\begin{equation}\label{reg-poinc}
\int_{B}|v-v_B|^2 d\mu\leq Cr\int_{\frac{3}{2}B}\left\vert \nabla_{A} v\right\vert^2\,dx,\ \ \ \ \ v\in Lip\left( \frac{3}{2}B\right) 
\end{equation}
where $v_B=\frac{1}{w(B)}\int_{B}vd\mu$, implies the proportional vanishing Poincar\'{e} inequality (\ref{1-1poinc}). Let $v\in Lip\left( \frac{3}{2}B\right)$ and suppose it vanishes on a subset $E\subseteq B$ satisfying $w(E)\geq \frac{1}{2}w(B)$. We have
\begin{align*}
\left\Vert v\right\Vert_{L^2(B;\mu)}	&= \left\Vert v\right\Vert_{L^2(B\setminus E;\mu)}    = \left\Vert v-\frac{1}{w(B)}\int_{E}vd\mu\right\Vert_{L^2(B\setminus E;\mu)}\\
&\leq \left\Vert v-\frac{1}{w(B)}\int_{E}v-\frac{1}{w(B)}\int_{B\setminus E}vd\mu\right\Vert_{L^2(B\setminus E;\mu)}
+\frac{w(B\setminus E)}{w(B)} \left\Vert v\right\Vert_{L^2(B\setminus E;\mu)}\\
&\leq \left\Vert v-\frac{1}{w(B)}\int_{B}vd\mu\right\Vert_{L^2(B;\mu)}+\frac{1}{2} \left\Vert v\right\Vert_{L^2(B\setminus E;\mu)},
\end{align*}
which gives using (\ref{reg-poinc})
\[
\left\Vert v\right\Vert_{L^2(B;\mu)}= \left\Vert v\right\Vert_{L^2(B\setminus E;\mu)} \leq 2\left\Vert v-\frac{1}{w(B)}\int_{B}vd\mu\right\Vert_{L^2(B;\mu)}\leq 2Cr\left\Vert \nabla_{A} v\right\Vert_{L^2\left(\frac{3}{2}B\right)}.
\]

\subsection{$(\Psi,1)$ Orlicz Sobolev implies $(\Phi,2)$ Orlicz Sobolev}\label{sec:orl}

Assume that the following $(\Psi,1)$ Orlicz Sobolev holds
\begin{equation}\label{orl-sob-w-1}
\left\Vert v\right\Vert _{L^{\Psi }_{\mu_{B}}}\leq C\frac{\varphi \left(
	r\left( B\right) \right) }{w(B)}\left\Vert \nabla_{A} v\sqrt{w}\right\Vert _{L^{1} },\ \ \ \ \ v\in Lip_{c}\left( B\right) ,
\end{equation}%
for the Young function $\Psi\equiv \sqrt{\Phi_{k,N}}$ with $N>1$ and $k\in \N$, where $\Phi_{k,N}$ is defined in (\ref{phi-def}). We claim that the $(\Phi,2)$ Orlicz Sobolev inequality (\ref{orl-sob-w}) holds with Young function $\Phi\equiv\Phi_{k,N}=\Psi^{2}$, namely
\begin{equation}\label{orl-sob-w-again}
\left\Vert v\right\Vert _{L^{\Phi }_{\mu_{B}}}\leq C\frac{\varphi \left(
	r\left( B\right) \right) }{w(B)^{\frac{1}{2}}}\left\Vert \nabla_{A} v\right\Vert _{L^{2} },\ \ \ \ \ v\in Lip_{c}\left( B\right) .
\end{equation}%
First, let $H$ be a Young function satisfying $H(t)=t^{2}\cdot\ln t\cdot\dots \cdot(\ln^{(k)} t)^N$ for large $t$, and extended continuously to a quadratic function for small values of $t$, similarly to $\Phi_{k, N}$ in (\ref{phi-def}).
Note that we have
\[
\Psi\circ H\approx \Phi.
\]
Indeed, $\ln H(t)=2\ln t+\ln \ln t+\dots+N\ln^{(k+1)} t\approx \ln t$, so that for $t^2\geq E$ we have
\begin{align*}
\Psi(H(t))&=t^{2}\cdot\ln t\cdot\ln H^2(t)\cdot\dots \cdot(\ln^{(k)} t)^N\cdot (\ln^{(k)} H^2(t))^N\\
&\approx t^{2}\cdot\ln t\cdot\ln H(t)\cdot\dots \cdot(\ln^{(k)} t)^N\cdot (\ln^{(k)} H(t))^N\\
&\approx t^{2}\cdot(\ln t)^2\cdot\dots \cdot(\ln^{(k)} t)^{2N}\\
&\approx t^{2}\cdot(\ln t^2)^2\cdot\dots \cdot(\ln^{(k)} t^2)^{2N}=\Phi(t).
\end{align*}
Now let  $v\in Lip_{c}\left( B\right)$, then $ H(v)\in Lip_{c}\left( B\right)$, and
\[
\nabla_{A}H(v)=H'(v)\nabla_{A}v.
\]
We now apply the $(\Psi,1)$ Orlicz Sobolev inequality (\ref{orl-sob-w-1}) to $H(v)$ and use H\"{o}lder's inequality to obtain
\begin{align}\label{inter}
\left\Vert H(v)\right\Vert _{L^{\Psi }_{\mu_{B}}}&\leq C\frac{\varphi \left(r\left( B\right) \right) }{w(B)}\left\Vert \nabla_{A} H(v)\sqrt{w}\right\Vert _{L^{1} }=C\frac{\varphi \left(r\left( B\right) \right) }{w(B)}\int |H'(v)||\nabla_{A}v|\sqrt{w}\notag\\
&\leq C\frac{\varphi \left(r\left( B\right) \right) }{w(B)}\left(\int |H'(v)|^2w\right)^{\frac{1}{2}}\left(\int |\nabla_{A}v|^2\right)^{\frac{1}{2}}=C\frac{\varphi \left(r\left( B\right) \right)}{w(B)^{\frac{1}{2}}} \left(\int |H'(v)|^2 d\mu_{B}\right)^{\frac{1}{2}}\left(\int |\nabla_{A}v|^2\right)^{\frac{1}{2}},
\end{align}
where we used the notation $\displaystyle d\mu_B\equiv\frac{w(x)dx }{w(B)}$. Differentiating $H(t)$ we obtain
\[
H'(t)\approx 2t\cdot\ln t\cdot\dots \cdot(\ln^{(k)} t)^N\approx \frac{H(t)}{t},
\]
so that
\[
|H'(t)|^{2}\approx t^{2}\cdot(\ln t)^2\cdot\dots \cdot(\ln^{(k)} t)^{2N}\approx \Phi(t).
\]
Combining with (\ref{inter}) gives
\[
\left\Vert H(v)\right\Vert _{L^{\Psi }_{\mu_{B}}}\leq C\frac{\varphi \left(r\left( B\right) \right)}{w(B)^{\frac{1}{2}}}\left(\int \Phi(v)d\mu_B\right)^{\frac{1}{2}}\left(\int |\nabla_{A}v|^2\right)^{\frac{1}{2}}
\]
Next, we note that $\Psi(t)=\sqrt{\Phi_0(t^2)}$, where $\Phi_0$ is defined by (\ref{phi0-def}). As shown in Section \ref{sec:fam}, $\Phi_0=\Phi_{k,N}^{0}$ is submultiplicative, and therefore $\Psi$ is submultiplicative. By definition of the norm (\ref{norm_def}) this implies
\[
\Psi^{(-1)}\left(\int \Psi\left(H(v)\right) d\mu_{B}\right)\leq 	\left\Vert H(v)\right\Vert _{L^{\Psi }_{\mu_{B}}},
\]
which gives using $\Psi\circ H\approx \Phi$
\begin{equation}\label{inter2}
\Psi^{(-1)}\left(\int \Phi\left(v\right) d\mu_{B}\right)\lesssim C\frac{\varphi \left(r\left( B\right) \right)}{w(B)^{\frac{1}{2}}}\left(\int \Phi(v)d\mu_{B}\right)^{\frac{1}{2}}\left(\int |\nabla_{A}v|^2\right)^{\frac{1}{2}}
=C\varphi(r(B)).
\end{equation}
We now need an estimate on $\Psi^{-1}(t)/\sqrt{t}$, namely, we would like to show
\[
\frac{\Psi^{-1}(t)}{\sqrt{t}}\gtrsim \Phi^{(-1)}(t). 
\]
Let $s=\Psi(t)$, then for $t^2\geq E$ we have
\begin{align*}
s&=t\ln t^2\ln\ln t^2\dots(\ln^{(k)}t^2)^{N}\\
\ln s&=\ln t+\ln\ln t^2+\dots+N\ln^{(k+1)}t^2\approx \ln t\\
t&=\frac{s}{\ln t^2\ln\ln t^2\dots(\ln^{(k)}t^2)^{N}}\approx \frac{s}{\ln s\ln\ln s\dots(\ln^{(k)}s)^{N}},
\end{align*}
and therefore
\[
\frac{\Psi^{-1}(t)}{\sqrt{t}}\approx \frac{\sqrt{t}}{\ln t\ln\ln t\dots(\ln^{(k)}t)^{N}}.
\]
Similarly we let $s=\Phi(t)$ and estimate
\begin{align*}
s&=t^2(\ln t^2)^2(\ln\ln t^2)^2\dots(\ln^{(k)}t^2)^{2N}\\
\ln s&=2\ln t+2\ln\ln t^2+\dots+2N\ln^{(k+1)}t^2\approx \ln t\\
t&=\frac{\sqrt{s}}{\ln t^2\ln\ln t^2\dots(\ln^{(k)}t^2)^{N}}\approx \frac{\sqrt{s}}{\ln s\ln\ln s\dots(\ln^{(k)}s)^{N}},
\end{align*}
which gives
\[
\Phi^{(-1)}(t)\approx  \frac{\sqrt{t}}{\ln t\ln\ln t\dots(\ln^{(k)}t)^{N}}.
\]
Combining with (\ref{inter2}) we obtain
\begin{equation}\label{almost-sob}
\Phi^{(-1)}\left(\int \Phi\left(v\right) d\mu_{B}\right)\lesssim C\frac{\varphi \left(r\left( B\right) \right)}{w(B)^{\frac{1}{2}}}\left(\int |\nabla_{A}v|^2\right)^{\frac{1}{2}}.
\end{equation}
We now claim that (\ref{almost-sob}) implies (\ref{orl-sob-w-again}). The proof appears in \cite{KRSSh2}, but we repeat it here for the reader's convenience. Given $v\in Lip_{c}\left( B\left( x,r \right) \right)$ define 
\[
u\equiv \frac{v}{a},\  \  a=C_1\frac{\varphi \left(r\left( B\right) \right)}{w(B)^{\frac{1}{2}}} \
\left\Vert \nabla _{A}v\right\Vert _{L^{2}}
\]
for a constant $C_1\geq 0$ to be determined later. Applying (\ref{almost-sob}) to $u$ gives
\[
\int\Phi \left(
u\right) d\mu _{B }\leq \Phi\left(\frac{C\varphi \left( r \right) \left\Vert \nabla _{A}w\right\Vert _{L^{2} }}{a}\right)=\Phi\left(\frac{C}{C_1}\right)\leq 1,
\]
provided $C_1\geq \frac{C}{\Phi^{-1}(1)}$. Therefore
\[
\int\Phi \left(
\frac{v}{a}\right) d\mu _{B}\leq 1,
\]
and by the definition of the norm, 
\[
\left\Vert v\right\Vert _{L^{\Phi }\left( \mu_{B} \right) }\leq a=C\frac{\varphi \left(r\left( B\right) \right)}{w(B)^{\frac{1}{2}}}\
\left\Vert \nabla _{A}v\right\Vert _{L^{2} }.
\]

\subsection{Admissibility}\label{sec:adm}
Suppose that Orlicz Sobolev inequality (\ref{orl-sob-w}) holds in the ball $B=B(x,r)$ with the Young function $\Phi$ satisfying $\Phi(t)=\Phi_0(t^2)$ for some Young function $\Phi_0$, and suppose $\phi^2/w^2 \in L^{\tilde{\Phi}_0}_{\mu_B}$, where $\tilde{\Phi}_{0}$ is the dual of $\Phi_0$. Then $\phi$ is $A$-admissible at $(x,r)$ and
\[
\Vert \phi \Vert _{X\left( B\left( x,r \right) \right) }\leq C\varphi(r)\left\Vert \left(\frac{\phi}{w}\right)^{2}\right\Vert ^{\frac{1}{2}}_{L^{\tilde{\Phi}_{0} }\left( \mu_{B} \right) }.
\]
The proof is similar to the case of $(\Phi,1)$ Orlicz Sobolev inequality, see \cite{KRSSh2}. Let $\phi^2/w^2 \in L^{\tilde{\Phi}_0}_{\mu_B}$ and $v\in \left(W^{1,2}_{A}\right)_{0}(B)$, using H\"{o}lder's inequality for Orlicz norms we have
\begin{align*}
\left\vert\int_{B}v\phi\right\vert&=\left\vert\int_{B}v\phi\,\chi_{\{v\neq 0\}}\right\vert=\left\vert\int_{B}v\frac{\phi}{\sqrt{w}}\sqrt{w}\,\chi_{\{v\neq 0\}}\right\vert\\
&\leq \left(\int v^2\frac{\phi^2}{w}\right)^{\frac{1}{2}}w\left(\{v\neq 0\}\right)^{\frac{1}{2}}
=w(B)^{\frac{1}{2}}\left(\int_{B}v^2\frac{\phi^2}{w^2}\frac{w}{w(B)}\right)^{\frac{1}{2}}w\left(\{v\neq 0\}\right)^{\frac{1}{2}}\\
&\leq 2w(B)^{\frac{1}{2}}\left\Vert v^2\right\Vert^{\frac{1}{2}} _{L^{\Phi_0 }\left( \mu_{B} \right) }\left\Vert \left(\frac{\phi}{w}\right)^{2}\right\Vert ^{\frac{1}{2}}_{L^{\tilde{\Phi}_{0} }\left( \mu_{B} \right) }w\left(\{v\neq 0\}\right)^{\frac{1}{2}}\\
&\approx w(B)^{\frac{1}{2}}\left\Vert v\right\Vert _{L^{\Phi }\left( \mu_{B} \right) }\left\Vert \left(\frac{\phi}{w}\right)^{2}\right\Vert ^{\frac{1}{2}}_{L^{\tilde{\Phi}_{0} }\left( \mu_{B} \right) }w\left(\{v\neq 0\}\right)^{\frac{1}{2}},
\end{align*}
where for the last equality we used $\left\Vert v^2\right\Vert^{\frac{1}{2}} _{L^{\Phi_0 }\left( \mu_{B} \right) }\approx \left\Vert v\right\Vert _{L^{\Phi }\left( \mu_{B} \right) }$, see e.g. \cite[Lemma 18]{HLWK}.
Applying (\ref{orl-sob-w}) to the second factor on the right gives
\begin{align*}
\left\vert\int_{B}v\phi\right\vert&\leq C\varphi(r)\left\Vert \nabla_{A}v\right\Vert _{L^{2} }\left\Vert \left(\frac{\phi}{w}\right)^{2}\right\Vert ^{\frac{1}{2}}_{L^{\tilde{\Phi}_{0} }\left( \mu_{B} \right) }w\left(\{v\neq 0\}\right)^{\frac{1}{2}},
\end{align*}
so that
\[
\frac{\left\vert\int_{B}v\phi\right\vert}{w\left(\{v\neq 0\}\right)^{\frac{1}{2}}\left\Vert \nabla_{A}v\right\Vert _{L^{2} }}\leq 
C\varphi(r)\left\Vert \left(\frac{\phi}{w}\right)^{2}\right\Vert ^{\frac{1}{2}}_{L^{\tilde{\Phi}_{0} }\left( \mu_{B} \right) }
\]
which implies the claim.
\begin{remark}
	Recall that in the case of a uniformly elliptic operator we have $w\equiv 1$, $\nabla_{A}\approx\nabla$ and 
	\[
	\Phi(t)=t^{\frac{2n}{n-2}},\quad \Phi_0(t)=t^{\frac{n}{n-2}},\quad \tilde{\Phi}_{0}(t)=t^{\frac{n}{2}}.
	\]
	For the right hand side to be admissible, we therefore need $\phi\in L^{n}(B)$. However, for local boundedness and H\"{o}lder continuity of solutions it is actually sufficient to require $\phi\in L^{q}(B)$ with $q>n/2$, see e.g. \cite{GT}. In fact, in \cite{C-UR} local boundedness was shown under an even weaker assumption that $\phi \in L^{A}(\Omega)$ with $A(t)=t^{\frac{n}{2}}\ln (e+t)^{q}$, $q>n/2$.
\end{remark}

\section{Examples}\label{sec:ex}
In this section we give examples of operators satisfying sufficient conditions in Theorems \ref{DG} and \ref{thm:continuity}.
\subsection{Infinitely degenerate operator with one vanishing eigenvalue}
Consider the operator of the form (\ref{def:op}) with the matrix $A$ satisfying
\[
A(x)\approx \mathrm{diag}\{1,1,\dots,1, f^{2}(x_1)\}
\]
studied in \cite{KRSSh, KS, KRSSh2}. Let $L$ be given by (\ref{def:op}) with $A(x)$ be as above, and assume that $f(x)=e^{-F(x)}$ is even, nonnegative, and $F$ satisfies the structure assumptions from \cite[Chapter 7]{KRSSh}, in particular it is twice differentiable, decreasing, concave, and doubling on some interval $(0,R)$. Such matrix satisfies (\ref{A-cond}) with $k=1$ and $w\equiv 1$. Moreover, the $(\Psi,1)$ Orlicz-Sobolev inequality holds with $\Psi(t)=t(\ln t)^N$, $N>1$, for large $t$, provided $F(x)\leq 1/x^{\sigma}$, $\sigma<1$. From Section \ref{sec:poinc} it follows that the $(\Phi,2)$ Orlicz-Sobolev inequality holds with $\Phi=\Psi^{2}=\Phi_{1,N}$ as in (\ref{phi-def}), and therefore Theorems \ref{DG} and \ref{thm:continuity} hold, reproducing the results from \cite{KRSSh}.

\subsection{Degeneracy controlled by a $2$-admissible weight}
Another example that fits into our framework is a degeneracy controlled by a $2$-admissible weight, see e.g. \cite{FKS}. More precisely, let $L=\nabla\cdot A\nabla$ with the matrix $A$ satisfying
\begin{equation}\label{A-weighted}
\xi\cdot A(x)\xi\approx w(x)|\xi|^2,\quad \text{a.e.}\   x\in\Omega,\  \forall \xi\in\R^n.
\end{equation}
If $w$ is $2$-admissible, then one can employ the standard Moser or DeGiorgi iteration scheme to obtain local boundedness and H\"{o}lder continuity of weak solutions. By definition of a $2$-admissible weight \cite{BB11}, the following weighted Sobolev and Poincar\'{e} inequalities hold
\begin{align}
\left(\frac{1}{w(B)}\int_{B}|v-v_B|^2 d\mu\right)^{\frac{1}{2}}\leq Cr\left(\int_{B}|\nabla v|^{2}d\mu\right)^{\frac{1}{2}},\ \ \ \ \ v\in Lip\left( B\right)  \label{2-2-std},	\\
\left(\frac{1}{w(B)}\int_{B}|v|^{2\sigma} d\mu\right)^{\frac{1}{2\sigma}}\leq Cr\left(\int_{B}|\nabla v|^{2}d\mu\right)^{\frac{1}{2}},\ \ \ \ \ v\in Lip_{c}\left( B\right) \label{sob-std},	
\end{align}
where $\sigma>1$. From (\ref{A-weighted}) it immediately follows that (\ref{2-2-std}) implies (\ref{reg-poinc}), which in turn implies (\ref{1-1poinc}), see Section \ref{sec:poinc}. Moreover, since $\|v\|_{L^{2\sigma}_{\mu_B}}\leq \|v\|_{L^{\Phi_{k,N}}_{\mu_B}}$ for any $\sigma>1$, $k\in \N$, $N>1$, see e.g. \cite{RaoRen}, we have that (\ref{sob-std}) implies (\ref{orl-sob-w}) using condition (\ref{A-weighted}) again and the fact that $\varphi(r)\geq r$.

\subsection{General setup}
Our final example is the class of degenerate operators described in \cite{C-UR} (see also a recent preprint \cite{C-UMR}) with the matrix $A$ of the same form (\ref{A-cond}) considered here. One can easily check that assumptions in \cite{C-UR} on the operator $L$ satisfy our hypotheses (H1) and (H2). If $u$ is a weak solution of $Lu=\phi$ with an admissible $\phi$ then by Theorem \ref{DG} it is locally bounded, and we recover the result in \cite{C-UR}. Note however that while the assumptions on the operator in \cite{C-UR} are much stronger (for example, a $(2\sigma,2)$-Sobolev inequality is assumed with $\sigma>0$), the authors make a much weaker assumption on the right hand side $\phi$. In particular, $\phi$ is far from being admissible. Moreover, the authors together with S.F. MacDonald have recently generalized their result to the case when only an Orlicz-Sobolev inequality is assumed, with an Orlicz gain $\Phi$ similar to (\ref{phi-def}) with $k=1$ and $N>1/2$. Note however that in both papers \cite{C-UR} and \cite{C-UMR} the authors consider a Dirichlet problem so they have an additional assumption of the solution vanishing on the boundary of $\Omega$. On the other hand, they only need to assume a global Sobolev inequality to hold in the domain $\Omega$.

Finally, we point out a recent paper \cite{KMY} which established necessary and sufficient conditions for one-dimensional weighted Orlicz-Sobolev inequalities. It also gives an example of a weight for which a $(\sigma,1)$-Sobolev inequality does not hold for any $\sigma>1$, but a $(\Psi,1)$ Orlicz Sobolev inequality holds for $\Psi$ as in Section \ref{sec:orl}. Extending these results to $n$-dimensional weights would provide new non-trivial examples of operators satisfying (H1) and (H2) and thus give a powerful application of the abstract theory developed in this paper.

\end{document}